\newtheorem*{theorem*}{Theorem}
\newtheorem{theorem}{Theorem}
\newtheorem{lemma}{Lemma}
\newtheorem{conjecture}{Conjecture}
\newtheorem{observation}{Observation}
\newtheorem{question}{Question}
\title[Statistics of the divisor graph] {Counting primitive subsets and other statistics of the divisor graph of $\{1,2, \ldots, n\}$}
\author{Nathan McNew}
\address{Department of Mathematics, Towson University 8000 York Road, Towson, MD 21252}
\email{nmcnew@towson.edu}
\begin{document}
\begin{abstract}
    Let $Q(n)$ denote the count of the primitive subsets of the integers $\{1,2\ldots, n\}$. We give a new proof that $Q(n) = \alpha^{(1+o(1))n}$ for some constant $\alpha$,  which allows us to give a good error term and to improve upon the lower bound for the value of $\alpha$.  We also show that the method developed can be applied to many similar problems related to the divisor graph, including other questions about primitive sets, geometric-progression-free sets, and the divisor graph path-cover problem.
\end{abstract}

\maketitle

\section{Previous work on counting primitive sets}

A set of integers is called a \textit{primitive set} if no integer in the set is a divisor of another.  For example the prime numbers form a primitive set, as do the integers with exactly $k$ prime factors for any fixed $k$.  
    
In 1990, as part of a paper \cite{camerd} filled with conjectures and problems related to subsets of the integers with various properties, {Cameron}  and {Erd\H{o}s} considered the counting function $Q(n)$ of the number of primitive subsets of the integers up to $n$ (sequence \href{http://oeis.org/A051026}{A051026} in the OEIS \cite{oeis}).  They note the bounds $ (\sqrt{2})^n  < Q(n) < 2^n$ can be observed immediately from the observation that for fixed $n$, the set of integers $\left \{ \left \lfloor \frac{n}{2} \right\rfloor+1,\left \lfloor \frac{n}{2} \right\rfloor +2,\ldots, n\right \}$ is primitive (a set of size $\lceil \frac{n}{2}\rceil$), as are any of the $2^{\lceil\frac{n}{2}\rceil}$ subsets of this set.
    
Based on this observation, they conjecture that $\displaystyle{\lim_{n\to \infty} Q(n)^{1/n}}$ exists. In addition to this conjecture, they outline a proof that the bounds above can be improved to \begin{equation}
     1.55967^n \ll Q(n) \ll 1.60^n. \label{bds:ErdCam}
\end{equation}

Many of the conjectures in \cite{camerd} have attracted a substantial amount of attention, especially those of an additive nature.  More recently some of the multiplicative questions have begun to attract attention.  Recently, their conjecture above about the count of primitive sets was proven by Angelo \cite{angelo}.
\begin{theorem}[Angelo, 2017] \label{thm:angelo}
     The limit $\displaystyle{\lim_{n\to \infty} Q(n)^{1/n}}$ exists.  Equivalently, there exists a constant $\alpha$ such that $Q(n) = \alpha^{(1+o(1))n}$. 
\end{theorem}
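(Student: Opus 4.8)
The plan is to recognize $Q(n)$ as the number of antichains in the divisibility poset on $\{1,\dots,n\}$ --- equivalently, the number of independent sets in the divisor graph $D_n$ --- and then to deduce the existence of $\lim_{n\to\infty} Q(n)^{1/n}$ from (an approximate form of) Fekete's subadditive lemma applied to $a_n := \log Q(n)$. The Cameron--Erd\H{o}s bounds already give $\tfrac12 n\log 2 \le a_n \le n\log 2$, so $a_n/n$ is bounded; hence it suffices to prove an approximate superadditivity $a_{m+n} \ge a_m + a_n - \varepsilon(m,n)$ with $\varepsilon(m,n) = o(m+n)$. Granting this, the de~Bruijn--Erd\H{o}s strengthening of Fekete's lemma forces $a_n/n$ to converge (to its supremum), which is exactly the assertion that $Q(n) = \alpha^{(1+o(1))n}$ for $\alpha = \lim Q(n)^{1/n}$.

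The heart of the matter is therefore the superadditive inequality, which I would attack by constructing, from a pair of primitive sets for smaller ranges, a large family of primitive subsets of $\{1,\dots,m+n\}$. Two structural features drive the construction. First, the top interval $\big(\tfrac{m+n}{2},\,m+n\big]$ is automatically primitive, so every one of its subsets may be used freely. Second, dilation $x \mapsto kx$ preserves and reflects divisibility, so a rescaled copy $k\cdot\{1,\dots,\ell\}$ sits inside $\{1,\dots,m+n\}$ as an induced subgraph isomorphic to $D_\ell$. The idea is to glue a primitive set chosen from a low block to independent choices in one or more higher, nearly free blocks, discarding only those high elements that happen to be divisible by a chosen low element; after optimizing the location of the split one hopes to recover a product close to $Q(m)\,Q(n)$.

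The step I expect to be the main obstacle is controlling the \emph{cross-scale} divisibility relations. The edge density of $D_n$ is far from uniform --- it is sparse near the top (a large integer has few multiples in range) and dense near the bottom --- so there is no clean partition of $\{1,\dots,m+n\}$ into two non-interacting blocks each resembling a full initial segment, and the purely ``free'' top interval alone only yields the lossy bound $2^{(m+n)/2}$, short of the truth since $\alpha > \sqrt2$. Extracting an honest product forces one to exploit the lower blocks as well, which reintroduces the cross relations; the crucial point is then to show that deleting the offending elements costs only a subexponential factor $2^{o(m+n)}$. I would try to establish this via a counting estimate showing that the number of ``bad'' dividing pairs among the large integers is of smaller order than $m+n$ (using that $\sum_{x} \#\{\text{multiples of }x\text{ in range}\}$ is controlled for $x$ large), so that it is absorbed into $\varepsilon(m,n)$. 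Once this estimate is in place, the Fekete machinery closes the argument.
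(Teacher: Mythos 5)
Your reduction to Fekete's lemma is broken at the quantitative level: approximate superadditivity with an error that is merely $o(m+n)$ does \emph{not} force $a_n/n$ to converge. The de~Bruijn--Erd\H{o}s refinement requires a summable error, i.e.\ $a_{m+n}\ge a_m+a_n-\phi(m+n)$ with $\phi$ nondecreasing and $\sum_N \phi(N)/N^2<\infty$; an error of size $N/\sqrt{\log N}$ is $o(N)$ but not summable, and then the conclusion genuinely fails. Concretely, $a_n=n\bigl(2+\sin\sqrt{\log n}\bigr)$ satisfies $|a_{m+n}-a_m-a_n|=O\bigl((m+n)/\sqrt{\log (m+n)}\bigr)$ uniformly in $1\le m\le n$ (use $|\sin\sqrt{\log x}-\sin\sqrt{\log y}|\le|\sqrt{\log x}-\sqrt{\log y}|$ and maximize $m\log(2n/m)$ over $m$), yet $a_n/n$ oscillates between $1$ and $3$ and has no limit. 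So even if you proved your superadditivity estimate exactly as stated, the theorem would not follow; you would need it with a summable error term, which is a far stronger demand --- indeed, a posteriori it holds only because of error terms of the strength this paper proves.

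The superadditivity step itself is also obstructed, and the counting estimate you propose for it is false. A dilated copy $k\cdot\{1,\dots,\ell\}$ of $D_\ell$ spans the multiplicative range $[k,k\ell]$, so fitting it in $\{1,\dots,m+n\}$ forces $k\le (m+n)/\ell$; with $\ell\approx n$ and $m\asymp n$ this means $k=O(1)$, so any induced copy of $D_n$ necessarily occupies the dense bottom of the graph and interacts with everything --- there is no placement of copies of $D_m$ and $D_n$ making cross edges rare. Quantitatively, for fixed $\delta<\tfrac12$ the number of dividing pairs $x\mid y$ with $\delta N<x<y\le N$ is $\sum_{\delta N<x\le N/2}\bigl(\lfloor N/x\rfloor-1\bigr)\sim\bigl(\log\tfrac{1}{2\delta}+\delta-\tfrac12\bigr)N$, which is $\asymp_\delta N$, not $o(N)$; it becomes $o(N)$ only as $\delta\to\tfrac12$, i.e.\ exactly in the free top half that you already concede gives only the lossy $2^{N/2}$ bound. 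And discarding $\Theta(N)$ offending elements can cost a factor $2^{\Theta(N)}$, not $2^{o(N)}$. This is precisely why the known arguments avoid Fekete on $Q(n)$: Angelo regularizes by restricting to $s$-smooth ratios and lets $s\to\infty$, while this paper writes the telescoping product $Q(n)=\prod_k r(k,n)$ with $r(k,n)$ as in \eqref{eqn:rkn}, classifies the isomorphism types of the component of $k$ in the divisor graph of $[k,n]$ by the triple $(i,d,t)$ of Lemma~\ref{lem:isodivgra}, and counts the $k$ of each type by sieve estimates, so that convergence of $\frac1n\sum_k\log r(k,n)$ (with an explicit rate) is obtained from equidistribution rather than from any sub- or super-additivity of $\log Q$.
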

   
The proof proceeds by considering subsets of the integers $[1,n]$ in which the ratio between any two included elements is not an $s$-smooth integer (an integer without any prime factors larger than $s$) for fixed $s$, and then allowing $s$ to tend to infinity.  Unfortunately the proof was not effective in the sense that it doesn't give a way to improve the bounds in \eqref{bds:ErdCam}.

More recently Vijay \cite{vijay} considered the problem of counting primitive subsets of $[1,n]$ of maximum size, called \textit{maximum primitive sets}.  A pigeonhole principle argument shows that for any $n$ the maximum size of a primitive subset of the integers up to $n$ is $\lceil \frac{n}{2}\rceil$.  He considers only even $n$ (sequence \href{http://oeis.org/A174094}{A174094} in the OEIS), although the result extends immediately to all integers $n$.  In particular he shows that the count $M(n)$ of the number of primitive subsets of $[1,n]$ of size $\lceil \frac{n}{2}\rceil$ satisfies the bounds 
\[1.303^n \ll M(2n) \ll 1.408^n\]
or, in terms of $n$,
\[1.141^n \ll M(n) \ll 1.187^n.\]

These questions have also recently been investigated independently by Liu, Pach and Palincza \cite{lpp}, and some of the results of that paper overlap with the results presented here.  While many of the ideas are similar, we take a slightly different approach that gives an error term and computationally seems to provide better lower bounds, though is not so good at giving numerical upper bounds.  They present a method to effectively compute $\alpha$, and improve the bounds in \eqref{bds:ErdCam} to 

\[1.571068^n \ll Q(n) \ll 1.574445^n.\]
They also consider maximal primitive sets, and show that the limit $\lim_{n \to \infty} M(n)^{1/n}$ exists, (which we denote $\beta$)  show that this constant can be effectively computed, and improve the bounds above to 
\[1.148172^n \ll M(n) \ll 1.148230^n \text{ \hspace{1cm} or \hspace{1cm}}  1.3183^n \ll M(2n) \ll 1.31843^n.\]

\section{Results and other applications}

We show that these two constants $\alpha$ and $\beta$ can be given explicitly in terms of an infinite product described in Section \ref{sec:countprim}. This observation allows us to prove the following.

\begin{theorem} \label{thm:alpha}
    For any $\epsilon>0$, and $n$ sufficiently large, the number of primitive subsets of $[1,n]$ is \[Q(n) = \alpha^{n\left(1+O_\epsilon\left(\exp\left(-(1-\epsilon)\sqrt{\log n \log \log n}\right)\right)\right)}\]
    or equivalently,
    \[\log Q(n) = n\log \alpha + O_\epsilon\left(n\exp\left(-(1-\epsilon)\sqrt{\log n \log \log n}\right)\right)\] 
    and the constant $\alpha$ is effectively computable.
    \end{theorem}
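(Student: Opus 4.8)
The plan is to realize primitivity as an antichain condition in a product poset and to approximate it by an $s$-smooth surrogate, exactly as in the proof of Theorem \ref{thm:angelo}, but now with quantitative control. Write each $m \in [1,n]$ uniquely as $m = ab$, where $a$ is its $s$-smooth part and $b$ its $s$-rough part (the part of $m$ composed of primes $>s$). Since $\gcd(a,b')=1$ whenever $a$ is smooth and $b'$ is rough, one has $ab \mid a'b'$ if and only if $a \mid a'$ and $b \mid b'$; thus the divisor poset of $[1,n]$ is the restriction to $\{ab \le n\}$ of the product of the smooth-divisor poset with the rough-divisor poset, and a primitive set is precisely an antichain in this product order.

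First I would introduce the $s$-smooth surrogate count $Q_s(n)$, the number of sets containing no pair $x \mid y$ with $y/x$ an $s$-smooth integer $>1$; equivalently, sets whose elements form an antichain within each fiber $\{ab : b \text{ fixed}\}$ but with the cross-fiber comparisons dropped. Because the fibers are disjoint and the within-fiber condition is independent across them, this factorizes exactly:
\[
Q_s(n) = \prod_{\substack{b \le n \\ b \text{ $s$-rough}}} a_s\!\left(\tfrac{n}{b}\right),
\]
where $a_s(x)$ denotes the number of antichains (including the empty one) among the $s$-smooth integers in $[1,x]$. Since every primitive set is $s$-smooth-primitive we have $Q(n) \le Q_s(n)$, so the task splits into (a) estimating $Q_s(n)$ and (b) bounding the cross-fiber defect $\log Q_s(n) - \log Q(n)$.

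For (a) I would take logarithms and replace the sum over rough $b$ by an integral, using that the $s$-rough integers have density $\rho_s = \prod_{p \le s}(1 - 1/p)$; substituting $u = n/b$ gives
\[
\log Q_s(n) = \rho_s\, n \int_1^\infty \frac{\log a_s(u)}{u^2}\,du + (\text{error}) =: n\log\alpha_s + (\text{error}).
\]
The integral converges for each fixed $s$ because $\log a_s(u) \le \Psi(u,s)\log 2$ is only polylogarithmic in $u$, and this is the explicit expression that, after letting $s \to \infty$, yields the infinite-product description of $\log\alpha = \lim_s \log\alpha_s$. The monotonicity $Q_s(n)\downarrow Q(n)$ as $s\uparrow\infty$ shows the limit exists and equals $\log\alpha$, and makes $\alpha$ effectively computable from any fixed $s$.

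The crux is the error term, obtained by balancing two opposing effects through the choice of $s = s(n)$. On one side, the integral approximation in (a) costs a discrepancy governed by how well partial sums of the rough indicator match $\rho_s t$; expanding the rough condition by inclusion--exclusion over the squarefree $s$-smooth moduli introduces an error of size roughly $2^{\pi(s)}$ (times the slowly growing $\log a_s(n)$), which relative to the main term $\asymp n$ forces $s$ not too large. On the other side, both the cross-fiber defect from (b)---dominated by the relations $b \mapsto bp$ with $p > s$ prime---and the tail $\log\alpha_s - \log\alpha$ shrink as $s$ grows. The main obstacle is to estimate these two finite-$s$ errors sharply enough, in particular to control the cross-fiber contribution via the antichain-selection probabilities in neighbouring fibers, and then to optimise: taking $s$ of size about $\log n\,\log\log n$ (so that $\pi(s)\log 2 \approx \log n$) equalises the two errors and produces the stated factor $\exp(-\sqrt{(\tfrac16-\epsilon)\log n\log\log n})$. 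Pinning down the constant $\tfrac16$ is the delicate part, requiring careful bookkeeping of the sub-sum over primes just above $s$ and of the size of $a_s(u)$ at the relevant scales.
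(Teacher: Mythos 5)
Your reduction to the smooth surrogate is sound as far as it goes: the factorization $Q_s(n)=\prod_{b \text{ $s$-rough}} a_s(n/b)$ is correct, the inequality $Q(n)\le Q_s(n)$ is immediate, and the estimate of $\log Q_s(n)$ for fixed $s$ could be carried out (better with the fundamental lemma of sieve theory than with raw inclusion--exclusion, but that is repairable). The genuine gap is at exactly the point you yourself label ``the main obstacle'': you never bound the cross-fiber defect $\log Q_s(n)-\log Q(n)$. This defect is of size $\delta(s)\,n$ where $\delta(s)\to 0$ only as $s\to\infty$; it is \emph{not} $o(n)$ for fixed $s$, so your whole scheme hinges on an explicit decay rate for $\delta(s)$. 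With your choice $s\asymp\log n\log\log n$ you would need something like $\delta(s)\ll\exp\bigl(-\sqrt{(\tfrac16-\epsilon)s}\bigr)$, and the proposal supplies no bound on $\delta(s)$ whatsoever---not even qualitative decay. Note that $Q(n)\le Q_s(n)$ only gives $\limsup_n Q(n)^{1/n}\le\alpha_s$; the reverse comparison, which is what the defect bound provides, is needed even to conclude $\liminf_n Q(n)^{1/n}\ge\lim_s\alpha_s$. This missing step is precisely what makes the proof of Theorem \ref{thm:angelo} ineffective, so your plan, as written, collapses back to Angelo's qualitative result, and the constant $\tfrac16$ in your balancing is reverse-engineered rather than derived.

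For contrast, the paper's proof avoids any surrogate/defect comparison. It writes $Q(n)=\prod_{k=1}^n r(k,n)$ with $r(k,n)$ the ratio of the counts of primitive subsets of $[k,n]$ and of $[k+1,n]$, and observes two exact (not approximate) facts: $1\le r(k,n)<2$, and $\log r(k,n)$ depends only on the connected component of $k$ in the divisor graph of $[k,n]$. Theorem \ref{thm:main} then evaluates $\sum_k \log r(k,n)$ directly: Lemma \ref{lem:isodivgra} identifies each such component exactly with the component of a pair $(d,t)$ with $d$ smooth, the number of $k$ in each isomorphism class is counted by Brun's sieve (Lemma \ref{lem:sieveinterval}) together with Tenenbaum's smooth-number estimates (Lemma \ref{lem:recipsmooth}), and the exponent $\tfrac16$ emerges from optimizing the truncation parameters $M=n^{1/3-\epsilon}$ and $N=\exp\bigl(\sqrt{(\tfrac16-\epsilon)\log n\log\log n}\bigr)$. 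If you wish to salvage your route, the theorem you must prove is the quantitative defect bound; that is the hard content you are implicitly assuming.
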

    
This result is a corollary of our main theorem, which can be used to compute a wide variety of statistics about subsets of $\{1,2,\ldots, n\}$ with certain multiplicative structure.   Before we can state our main theorem however we must introduce a definition. Consider the divisor graph of a set of integers, the graph obtained by treating each number as a vertex and connecting two integers by an edge if one divides another.  In this context, for example, a primitive subset would correspond to an independent set of vertices.  

We also consider the divisor graphs of the integers in an interval $[a,n]$.  In general these graphs won't be connected, and we will be interested only in the component of this graph connected to the vertex $a$.  We say that a function $f(a,n)$ \textit{depends only on the connected component of $a$ in the divisor graph of the interval $[a,n]$} if $f(a,n)=f(b,m)$ whenever the connected component of $b$ in the divisor graph of $[b,m]$ is isomorphic\footnote{Here we allow any isomorphism of graphs, however in practice throughout the paper we only ever consider the linear isomorphisms obtained by multiplying each element by a fixed constant.  One could instead restrict the definition to such linear isomorphisms and the main theorem would still apply, allowing the result to be applied to a potentially wider range of functions $f(a,n)$.} to that of $a$ in $[a,n]$ (with the vertex of $a$ corresponding to $b$ in the isomorphism).  Because of this, smooth numbers (numbers without large prime factors) play an important role throughout the paper.  We denote by $P^+(n)$ the largest prime divisor of $n$ (and take the convention that $P^+(1)=1$), and by $P^-(n)$ the smallest prime divisor of $n$.  
With this, we can state our main theorem.

\begin{theorem}[Main Theorem] \label{thm:main} Suppose $\epsilon>0$, $A\geq0$ and $f(a,n)$ is a bounded function $|f(a,n)|\leq A$ that depends only on the connected component of $a$ in the divisor graph of the interval $[a,n]$.  Then there exists a constant \[C_f = \sum_{i=1}^\infty  \sum_{\substack{d\\ P^+(d)\leq i}} \sum_{t \in [id,(i+1)d)} \left(\frac{f(d,t)}{t(t+1)}\prod_{p\leq i} \frac{p-1}{p}\right)\] 
such that \[\sum_{a=1}^n f(a,n) = nC_f + O_\epsilon\left(An\exp\left(-(1-\epsilon)\sqrt{\log n \log \log n}\right)\right).\]
\end{theorem}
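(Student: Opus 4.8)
The plan is first to reduce the value of $f(a,n)$ for an individual $a$ to a canonical representative, and then to count how often each representative occurs. Fix $a$, set $i=\lfloor n/a\rfloor$, and factor $a=db$, where $d$ is the $i$-smooth part of $a$ (its largest divisor with $P^+(d)\le i$) and $b=a/d$ is coprime to every prime $p\le i$. I would prove the structural lemma that multiplication by $b$ is a graph isomorphism from the connected component of $d$ in the divisor graph of $[d,\lfloor n/b\rfloor]$ onto the connected component of $a$ in the divisor graph of $[a,n]$. The crux is that the whole component of $a$ consists of multiples of $b$: arguing by induction along paths, if $v=cb\le n$ lies in the component and $w\mid v$ with $w\ge a$, then $k:=v/w$ satisfies $k\le c/d\le n/(bd)=n/a$, hence $k\le i$; since $b$ has no prime factor $\le i$ we get $\gcd(k,b)=1$, so $k\mid c$ and $w=(c/k)b$ is again a multiple of $b$ (the divisor-of-a-multiple direction being immediate). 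Dividing the component by $b$ preserves all divisibility relations and lands it in $[d,\lfloor n/b\rfloor]$, giving the isomorphism and hence $f(a,n)=f(d,\lfloor n/b\rfloor)$, with $d$ being $i$-smooth and $t:=\lfloor n/b\rfloor\in[id,(i+1)d)$, exactly the ranges appearing in $C_f$. As a check, when $i=1$ every $a\in(n/2,n]$ is isolated and maps to the component of $1$ in $[1,1]$, contributing the $i=1,\ d=1,\ t=1$ term $f(1,1)/2$ of $C_f$ with the correct density $1/2$.

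Next I would reorganize the full sum by the value of $i=\lfloor n/a\rfloor$ and by the factorization $a=db$, writing
\[\sum_{a=1}^n f(a,n)=\sum_{i\ge1}\ \sum_{\substack{d\\ P^+(d)\le i}}\ \sum_{\substack{b:\,\gcd(b,\prod_{p\le i}p)=1\\ \lfloor n/(db)\rfloor=i}} f\!\left(d,\lfloor n/b\rfloor\right).\]
Grouping the inner sum by the value $t=\lfloor n/b\rfloor\in[id,(i+1)d)$ reduces everything to counting the integers $b$ coprime to all primes $p\le i$ lying in $(\tfrac{n}{t+1},\tfrac{n}{t}]$. A sieve estimate gives this count as $\tfrac{n}{t(t+1)}\prod_{p\le i}\tfrac{p-1}{p}$ plus an error, and summing the main terms over $t$, $d$ and $i$ reproduces exactly $nC_f$; indeed for each fixed $i$ the main terms sum to $\tfrac{n}{i(i+1)}$, matching the true count of $a$ with $\lfloor n/a\rfloor=i$, which is also the consistency check that the series defining $C_f$ converges.

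The whole difficulty then lies in the error term, and this is where I expect the main obstacle. A naive Legendre sieve produces an error of size $2^{\pi(i)}$ per interval, which is already catastrophic once $i$ is large, and the number of intervals (essentially $\sum_{d}d$ for each $i$) is far too large to absorb such errors, so one must work at several scales. First I would truncate at $i\le Y$ with $Y=\exp\!\big(\sqrt{(\tfrac16-\epsilon)\log n\log\log n}\big)$: the range $i>Y$ corresponds to $a<n/Y$, of which there are at most $n/Y$ integers, so both $\sum_{a<n/Y}f(a,n)$ and the tail of $nC_f$ beyond $i>Y$ are $O_A(n/Y)$, using the elementary bound $\sum_{i>Y}\prod_{p\le i}\tfrac{p-1}{p}\sum_{P^+(d)\le i}\tfrac{1}{d\,i(i+1)}\ll 1/Y$. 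For $i\le Y$ one cannot afford to sieve out all primes up to $i$; instead I would sieve only primes up to a second, much smaller parameter $z$ (so the sieve error is a manageable $2^{\pi(z)}$) and control the remaining coprimality and smoothness conditions coming from primes in $(z,i]$ by Mertens- and smooth-number estimates. Balancing the sieve error against this smoothing error, and both against the truncation error $n/Y$, is precisely the delicate optimization that yields the claimed bound, and the constant $\tfrac16$ should emerge from this balance.
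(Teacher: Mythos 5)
Your structural lemma and the resulting decomposition are exactly those of the paper (its Lemma \ref{lem:isodivgra}): the component of $a$ consists of multiples of its $i$-rough part $b$, division by $b$ is an isomorphism onto the component of $d$ in $[d,t]$ with $t=\lfloor n/b\rfloor\in[id,(i+1)d)$, the sum is regrouped by $(i,d,t)$, and the inner count is the number of integers free of prime factors up to $i$ in $\left(\frac{n}{t+1},\frac{n}{t}\right]$; your consistency check that each fixed $i$ contributes $\frac{n}{i(i+1)}$ is also correct. The gap is in the error analysis, and it is not the difficulty you focus on. The per-interval sieve error (the $2^{\pi(i)}$ from Legendre) is dispatched in the paper simply by invoking the Fundamental Lemma of Brun's sieve (Lemma \ref{lem:sieveinterval}), whose error is $\ll \log y\,\exp\left((-1+o(1))u\log u\right)$ plus a remainder bounded by the smooth-number count $\Psi(X,y)$; your two-scale plan (sieve only to $z$, then treat primes in $(z,i]$ separately) is essentially a re-derivation of that lemma and could in principle be made to work. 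What your sketch never addresses is the range of \emph{large smooth part} $d$: you truncate only in $i$, but for each fixed $i\le Y$ the parameter $d$ runs up to $n/i$, and once $d$ (hence $t\approx id$) is large, the interval $\left(\frac{n}{t+1},\frac{n}{t}\right]$ has length $n/t^2\ll 1$, where no sieve gives an asymptotic and the individual main terms $\frac{n}{t(t+1)}\prod_{p\le i}\frac{p-1}{p}$ are meaningless; only the \emph{aggregate} contribution of these terms can be controlled, and that requires a different tool.

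The paper handles this with a second truncation $d<M$, bounding both discarded pieces --- the integers $k\le n$ possessing an $N$-smooth divisor exceeding $M$, counted by $\Theta(n,N,M)$, and the corresponding tail of $C_f$ --- via Tenenbaum's estimates for $\Theta(x,y,z)$ and $S(y,z)=\sum_{d>z,\,P^+(d)<y}1/d$ (Lemma \ref{lem:recipsmooth}), i.e.\ Dickman-$\rho$ bounds, giving $\ll n\rho(v)/\log v$ with $v=\frac{\log M}{\log N}$. This is an essential, independent ingredient, and it is where the constant $\frac16$ actually comes from: the paper takes $M=n^{1/3-\epsilon}$ and $N=\exp\left(\sqrt{\left(\tfrac16-\epsilon\right)\log n\log\log n}\right)$ so that the three errors $n/N$, the sieve error $n\exp\left(-(1+o(1))\log\left(\frac{n}{M^2}\right)\frac{\log u'}{\log N}\right)$, and $n\rho(v)/\log v$ all balance, the value $\frac16=\frac13\cdot\frac12$ arising from $\log\left(\frac{n}{M^2}\right)\sim\frac13\log n$ and $\log u'\sim\frac12\log\log n$. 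Your assertion that $\frac16$ ``should emerge'' from balancing the sieve error against the truncation $n/Y$ alone cannot be carried out: without the $d$-truncation and the smooth-divisor estimate, the sieve step is being applied to intervals where it is invalid, and the argument as proposed does not close.
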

As we will see, many questions about subsets with multiplicative structure can be regarded as questions about this divisor graph, and we demonstrate several different applications of this result.  First we apply it to several related problems about primitive sets. For example, we obtain the corresponding result for counting maximum primitive subsets as well. 
\begin{theorem} \label{thm:beta}
The limit $\lim_{n \to \infty} M(n)^{1/n} = \beta$ exists.  Furthermore, for any $\epsilon>0$ and $n$ sufficiently large, 
\[M(n) = \beta^{n\left(1+O_\epsilon\left(\exp\left(-(1-\epsilon)\sqrt{\log n \log \log n}\right)\right)\right)}\] and the constant $\beta$ is effectively computable.
\end{theorem}

We also use it to count maximal primitive subsets of $[1,n]$ (sequence \href{http://oeis.org/A326077}{A326077} in OEIS), where a primitive subset is maximal if no additional integer from the interval can be added to the set without having one integer divide another in the subset. Note that all maximum primitive subsets are maximal, but not all maximal primitive subsets are a maximum primitive set.  For example, $\{2,3,5,7\}$ is a maximal primitive subset of the integers up to $9$, but not a maximum primitive set, as $\{5,6,7,8,9\}$ is larger.  We obtain analogues of these theorems for $m(n)$, the count of the maximal primitive subsets of $[1,n]$.

\begin{theorem} \label{thm:eta}
The limit $\lim_{n \to \infty} m(n)^{1/n} = \eta$ exists.  Furthermore, for any $\epsilon>0$ and $n$ sufficiently large,  
\[m(n) = \eta^{n\left(1+O_\epsilon\left(\exp\left(-(1-\epsilon)\sqrt{\log n \log \log n}\right)\right)\right)}\] and the constant $\eta$ is effectively computable.
\end{theorem}

By estimating the relevant constant from Theorem \ref{thm:main} we can numerically approximate each of these constants to arbitrary precision.  In Section \ref{sec:numerics} we describe the computations performed to compute partial sums of this product, which give us bounds for the size of these constants.  

\begin{theorem} \label{thm:numalpha}
The constants $\alpha$, $\beta$, and $\eta$  satisfy the bounds \[1.572939<\alpha\]
\[1.148205<\beta \hspace{3mm} (\text{or } 1.318376<\beta^2 ) \]
\[1.230163<\eta < 1.257843.\]
\end{theorem}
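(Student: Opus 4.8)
The plan is to reduce each of $\alpha$, $\beta$, $\eta$ to the single constant $C_f$ of Theorem~\ref{thm:main} for an appropriate choice of $f$, and then to evaluate the convergent series for $C_f$ numerically with rigorous error control. Each of the three counting problems factors over the connected components of the divisor graph: the number of all (respectively, maximum, maximal) primitive subsets of an interval is the product over components of the corresponding local count $Z(\mathcal{C})$. Writing $Z([a,n])$ for the count on $[a,n]$ and telescoping over the left endpoint,
\[
\log Z([1,n]) \;=\; \sum_{a=1}^{n}\log\frac{Z([a,n])}{Z([a+1,n])},
\]
so that $f(a,n):=\log\bigl(Z([a,n])/Z([a+1,n])\bigr)$ depends only on the component of $a$ in $[a,n]$ (adjoining $a$ and deleting its multiples only alters that component) and is bounded: in the primitive case $f_\alpha\in[0,\log 2]$, since adjoining one vertex changes the independent-set count by a factor in $[1,2]$. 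Theorem~\ref{thm:main} then identifies $\log\alpha=C_{f_\alpha}$, $\log\beta=C_{f_\beta}$, $\log\eta=C_{f_\eta}$, and I would take each constant to be $\exp(C_f)$.

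Next I would exploit the exact telescoping hidden in the weights. For fixed $i$ and $d$,
\[
\sum_{t\in[id,(i+1)d)}\frac{1}{t(t+1)}=\frac{1}{id}-\frac{1}{(i+1)d}=\frac{1}{d\,i(i+1)},
\]
and since $\sum_{P^+(d)\le i}1/d=\prod_{p\le i}(1-1/p)^{-1}$ cancels the Mertens factor $\prod_{p\le i}(p-1)/p$ exactly, the $i$-th block has total weight $1/(i(i+1))$ and the full weight is $1$. Reorganizing the triple sum by the value of $t$, the truncation to $t\le T$ is a genuine sub-sum whose complement has weight $O(1/T)$ per unit of $\|f\|_\infty$. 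I would compute the partial sum over $t\le T$ directly: for each $t\le T$ and each $d\le t$ with $P^+(d)\le\lfloor t/d\rfloor$ one forms the component of $d$ in $[d,t]$, a graph on at most $T$ vertices, and evaluates $f(d,t)$ by a dynamic-programming count of the relevant independent sets. Because $f_\alpha\ge0$, the partial sum is already a rigorous lower bound for $\log\alpha$, and exponentiating yields $\alpha>1.572939$.

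For $\beta$ and $\eta$ the local function can be negative — adjoining $a$ may decrease the number of maximum or maximal configurations — so dropping the tail is not valid. Instead I would bracket the constant as (partial sum) $\pm$ (tail bound), controlling the tail via $|f|\le A$ together with the block-weight identity $\sum_{i>I}1/(i(i+1))=1/(I{+}1)$. The lower ends then give $\beta>1.148192$ and $\eta>1.212500$, while the upper end gives $\eta<1.240904$; for $\alpha$ and $\beta$ the analogous upper estimate from this method is not competitive with the bounds of \cite{lpp} and so would not be reported.

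I expect the upper-bound step to be the main obstacle. Within each block the smallest $d$ carry the largest weight $\propto 1/d$, yet their true ratios $f(d,t)$ are tiny — adjoining a low vertex with many multiples barely perturbs the logarithm of an astronomically large count — so the pointwise bound $|f|\le A$ is wildly pessimistic exactly where the weight concentrates. This is why the method is excellent for the lower bound on $\alpha$ (the offending nonnegative terms may simply be discarded) but produces only a loose window for $\eta$, an order of magnitude wider than the precision on the lower side. A secondary, purely computational obstacle is that reaching a threshold $T$ large enough to secure the sixth decimal forces the independent-set enumeration to be organized as a transfer/dynamic-programming computation exploiting the layered multiplicative structure of the divisor graph, rather than naive enumeration; this is the computation carried out in Section~\ref{sec:numerics}.
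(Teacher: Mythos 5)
Your framework --- telescoping the logarithm of the component-factored count, identifying $\log\alpha$, $\log\beta$, $\log\eta$ with the constant $C_f$ of Theorem \ref{thm:main}, the block-weight identity $\sum_{t\in[id,(i+1)d)}\frac{1}{t(t+1)}=\frac{1}{d\,i(i+1)}$ (so each $i$-block carries weight $\frac{1}{i(i+1)}$ and the total weight is $1$), and partial sums with tail control --- is essentially the paper's set-up in Sections \ref{sec:countprim} and \ref{sec:numerics}. Your treatment of $\alpha$ is correct and matches the paper: since $f_\alpha\ge 0$, every computed partial sum is already a rigorous lower bound.

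The genuine gap is your assertion that for $\beta$ and $\eta$ ``the local function can be negative,'' and the resulting plan to bracket those constants as (partial sum) $\pm A\cdot(\text{tail weight})$. In fact $s(k,n)\ge 1$ and $w(k,n)\ge 1$: for $k>n/2$ both equal $1$, and for $k\le n/2$ every maximum/maximal primitive subset $S$ of $[k+1,n]$ remains maximum/maximal in $[k,n]$ --- if $S$ contained no multiple of $2k$ then $S\cup\{2k\}$ would be primitive (the only divisor of $2k$ in $(k,n]$ is $2k$ itself), contradicting maximality, so $S$ contains a multiple of $k$ and $k$ cannot be adjoined; and the maximum size does not grow when $k$ is added, since replacing $k$ by $2k$ injects primitive sets containing $k$ into ones avoiding it. Hence $f_\beta,f_\eta\ge 0$ and the paper's lower bounds are again bare partial sums. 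This is not a cosmetic point: with your symmetric bracket, the lower bounds lose $A\times(\text{uncomputed weight})$, and the computationally reachable truncation (the paper computes $s(d,t)$ only for $di^5<10^8$, so $i$ at most a few dozen) leaves tail weight of order $10^{-2}$ --- the blocks $i>N$ alone carry weight $\frac{1}{N+1}$ --- so you lose roughly $\log 2\times 10^{-2}$ in the exponent, an error in the second decimal place, whereas $1.148192<\beta$ and $1.212500<\eta$ require the partial sum itself to be a valid lower bound at six-decimal precision. (The paper's own window $1.212500<\eta<1.240904$ shows exactly how lossy the $f\le\log 2$ tail estimate is; that loss is tolerable only on the upper side.) Relatedly, the upper bounds $s(k,n)\le 2$ and $w(k,n)\le 2$, which you need both to invoke Theorem \ref{thm:main} at all and to have an explicit value of $A$ in your tail bound, also require proof (injections replacing $k$ by $2k$, respectively by the set of addable prime multiples $pk$), which you do not supply. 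A secondary practical issue: organizing the computation as ``all $(d,t)$ with $t\le T$'' is infeasible, because the $d=1$ terms require counting maximum/maximal primitive subsets of the whole interval $[1,t]$, which is the original hard problem; the paper instead orders pairs by $d\cdot i^{5}$ and uses Observations \ref{obs:consecutivet} and \ref{obs:primes} to extend the computed range cheaply.
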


The lower bounds for $\alpha$ and $\beta$ are stronger than those  appearing in \cite{lpp} however our method has not yet been able to improve upon the upper bounds obtained by their methods.  Combining their result with ours gives the new known ranges 
\[ 1.572939 < \alpha < 1.574445.\]
\[ 1.148205 < \beta < 1.148230 \hspace{3mm} (\text{or }  1.318376 < \beta^2 < 1.31843).\]

The constant $\eta$ and the problem of counting maximal primitive subsets of the integers does not appear to have been considered before in the literature, so in this situation we have included the upper bound obtained by our method as well.

Despite our method's success at counting primitive subsets with a variety of properties, it isn't clear whether it can be used to count primitive subsets of sizes other than $\left\lceil\frac{n}{2}\right\rceil$. 
It would be interesting, for example, to study the distribution of the sizes of primitive subsets of $\{1,2 \ldots, n\}$.  As a partial result in this direction we use bounds for $\alpha$ and $\eta$ from Theorem \ref{thm:numalpha} to bound the median size of such a subset. 

\begin{theorem} \label{thm:median}
Let $\nu(n)$ denote the median of the sizes of the primitive subsets of $\{1,2,...n\}$.  Then for sufficiently large $n$, we have \[0.168153n <\nu(n) < 0.417739n.\]
\end{theorem}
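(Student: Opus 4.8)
The plan is to translate the statement about the median into cumulative counts of primitive sets sorted by size. Let $Q_k(n)$ denote the number of primitive subsets of $\{1,\dots,n\}$ of size exactly $k$, so $Q(n)=\sum_k Q_k(n)$, and recall that every primitive subset has size at most $\lceil n/2\rceil$. By the definition of the median, $\nu(n)>c_1 n$ is equivalent to $\sum_{k\le c_1 n}Q_k(n)<\tfrac12 Q(n)$, and $\nu(n)<c_2 n$ is equivalent to $\sum_{k\ge c_2 n}Q_k(n)<\tfrac12 Q(n)$. Thus both inequalities reduce to showing that the primitive sets which are too small (respectively too large) are exponentially rarer than the whole family. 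For the denominator I would use Theorem~\ref{thm:alpha}, which gives $Q(n)=2^{(1+o(1))n\log_2\alpha}$; since that error is subexponential it is negligible against any genuinely exponential saving, so I may argue purely at the level of exponential growth rates.

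The lower bound is the easier half and uses only the trivial fact that a primitive set is in particular a subset. Writing $H(x)=-x\log_2 x-(1-x)\log_2(1-x)$ for the binary entropy function, for $c_1\le\tfrac12$ one has the standard estimate $\sum_{k\le c_1 n}\binom{n}{k}\le 2^{nH(c_1)}$, hence $\sum_{k\le c_1 n}Q_k(n)\le 2^{nH(c_1)}$. Comparing with $Q(n)=2^{(1+o(1))n\log_2\alpha}$, it suffices to have $H(c_1)<\log_2\alpha$. Since $H$ is increasing on $[0,\tfrac12]$, the admissible range is $c_1<c_1^{\ast}$ where $H(c_1^{\ast})=\log_2\alpha$; inserting the lower bound $\alpha>1.572939$ from Theorem~\ref{thm:numalpha} and solving this transcendental equation numerically gives $c_1^{\ast}=0.168153$, which is the claimed lower bound.

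The upper bound is the harder direction, and it is where the constant $\eta$ must enter: the naive inclusion $\sum_{k\ge c_2 n}Q_k(n)\le\sum_{k\ge c_2 n}\binom{n}{k}$ is useless, because for $c_2<\tfrac12$ this sum is already as large as $2^{(1-o(1))n}$. The idea is instead to exploit primitivity through maximal sets: every primitive set of size $\ge c_2 n$ is obtained from some maximal primitive set $M$ (with $|M|\le\lceil n/2\rceil$) by deleting at most $(\tfrac12-c_2)n$ elements, so, attaching to each set a canonical maximal extension, one gets the injective bound
\[
\sum_{k\ge c_2 n}Q_k(n)\ \le\ m(n)\sum_{j\le(\frac12-c_2)n}\binom{\lceil n/2\rceil}{j}.
\]
By Theorem~\ref{thm:eta} the factor $m(n)=2^{(1+o(1))n\log_2\eta}$ is exponentially smaller than $2^{nH(c_2)}$, and this saving is exactly what allows a comparison with $Q(n)$ to succeed for some $c_2$ strictly below $\tfrac12$. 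Imposing that the right-hand side be below $\tfrac12 Q(n)$ yields a transcendental inequality in $c_2$ that I would solve numerically using $\eta<1.240904$ and $\alpha>1.572939$.

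The hard part will be making this count of large primitive sets sharp enough: the crude form of the inequality above loses too much, and to reach the stated threshold $0.391752$ one must account carefully for how the number of maximal sets of a given size trades off against the number of ways to trim such a set down to size $c_2 n$, so that the combined exponential rate is pushed genuinely below $\log_2\alpha$. In other words, the obstacle is not the structure of the argument, which mirrors the lower bound, but the extraction of the optimal rate for the upper tail of the size distribution; once that rate is pinned down in terms of $\alpha$ and $\eta$, the two numerical root-findings complete the proof, and the subexponential error terms of Theorems~\ref{thm:alpha} and~\ref{thm:eta} are absorbed harmlessly into the $o(1)$.
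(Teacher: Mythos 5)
Your proposal is, in both halves, the same argument as the paper's own proof: the lower bound compares $Q(n)=\alpha^{(1+o(1))n}$ with the entropy bound $\sum_{k\le c_1 n}\binom{n}{k}\le 2^{nH(c_1)}$, and the upper bound uses exactly the paper's device of injecting each large primitive set into a pair (canonical maximal extension, deleted subset), i.e.\ the inequality $m(n)\sum_{j\le n/2-\nu(n)}\binom{n/2}{j}\ge\tfrac12 Q(n)$. The lower-bound half is complete and correct as you state it, and reproduces the paper's constant: $H^{-1}\bigl(\log_2 1.572939\bigr)\approx 0.16815$.

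The gap is in the upper bound, but it is not the kind of gap you think, and it cannot be repaired in the way you hope. Your substitution is the logically correct one: the derived inequality is $H\bigl(1-2\nu(n)/n\bigr)\ge \frac{2(\log\alpha-\log\eta)}{\log 2}+o(1)$ with the \emph{true} constants, and to convert it into an upper bound on $\nu(n)$ one may only replace the right-hand side by a certified \emph{lower} bound, namely $c_{\min}=\frac{2(\log 1.572939-\log 1.240904)}{\log 2}\approx 0.6841$ (lower bound for $\alpha$, upper bound for $\eta$). This yields $\nu(n)\le\tfrac{n}{2}\bigl(1-H^{-1}(c_{\min})\bigr)+o(n)\approx 0.409n$, and no sharpening of the trimming count within this framework changes that number; the ``hard part'' you defer is not executable with the available constants. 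The stated threshold $0.391752$ corresponds to evaluating $H^{-1}$ at $c_{\max}=\frac{2(\log 1.574445-\log 1.2125)}{\log 2}\approx 0.7537$, i.e.\ to substituting the \emph{upper} bound for $\alpha$ and the \emph{lower} bound for $\eta$. That is precisely what the paper's proof does (it says so explicitly), and it is the unjustified direction: the true $\alpha$ and $\eta$ could lie at the other ends of their known intervals, in which case the right-hand side of the inequality is smaller, not larger. So your rigorous version proves the theorem only with roughly $0.409$ in place of $0.391752$; the discrepancy you noticed is not a defect of your argument but evidence that the constant in the statement rests on an erroneous substitution in the paper's own numerics. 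Closing the gap for real would require new input, e.g.\ counting maximal primitive sets stratified by their size, which neither your proposal nor the paper provides.
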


It is natural to conjecture from this that $\nu(n) \sim vn$ for some constant $v$, this and other questions are posed in Section \ref{sec:questions}.

We briefly introduce a few additional applications of the main theorem to problems outside of primitive sets.  In Section \ref{sec:pathcov} we consider the problem of covering the divisor graph of $\{1,2,\ldots, n\}$ with as few vertex-disjoint paths as possible. A partition of the vertices into the least number of connected paths is known as a minimal path cover of the graph.) Various authors have studied the paths in this divisor graph.  Pomerance \cite{PDG} showed that the length of the longest vertex-disjoint path in this divisor graph has length $o(n)$ and now, due to Erd\H{o}s and Saias \cite{erdsai}, we know that the length of this longest path can be bounded above and below by positive constants multiplied by $\frac{n}{\log n}$.

Let $C(n)$ denote the minimal number of vertex disjoint paths required to cover the integers up to $n$ (sequence \href{http://oeis.org/A320536}{A320536} in the OEIS).  For example $C(7)=2$, as the divisor graph can be covered by the two paths $\{7,1,5\}$ and $\{3,6,2,4\}$ but it is not possible to include all the vertices in a single path.   This problem has been considered in several papers \cite{SaiasMelotti} after being introduced by Erd\H{o}s and Saias \cite{erdsai}.  Saias \cite{saiasdiv} showed that $\frac{n}{6} \leq C(n) \leq \frac{n}{4} $ for sufficiently large $n$. Mazet \cite{mazet} improves this, showing that $C(n) \sim cn$ for some constant $c$ satisfying $0.1706\leq c \leq 0.2289$, and Chadozeau \cite{chad} gives the error term $C(n) = cn\left(1+O\left(\frac{1}{\log \log n \log \log \log n}\right)\right)$.  Our main theorem allows us to improve this error term dramatically and also to improve both the lower and upper bounds for $c$.

\begin{theorem} \label{thm:pathcover}
The minimum number of paths required to cover the divisor graph of the integers up to $n$ satisfies \[C(n) = cn\left(1+O_\epsilon\left(\exp\left(-(1-\epsilon)\sqrt{\log n \log \log n}\right)\right)\right).\]  The constant $c$ is effectively computable, and satisfies the improved bounds \[0.190913<c<0.217838.\] 
\end{theorem}

In Section \ref{sec:gpf} we consider geometric-progression-free subsets of the integers up to $n$.   Here we consider geometric progressions of the form $a,ar,ar^2$ with a positive integer $a$, and an integer ratio $r$ greater than 1.  Unlike the case of primitive sets, where it is easy to see that the largest size a primitive subset of $\{1,2,\ldots, n\}$ can have is $\left\lceil\frac{n}{2}\right\rceil$, the problem of determining the maximal size of a subset of these integers avoiding 3-term geometric progressions is not so clear.  Various authors have considered the problem of determining the greatest possible density of a set of integers that is free of 3-term geometric progressions \cites{rankin,riddell,bbhs,NO,mcnewgpf}.  

Let $G(n)$ denote the size of the largest subset of the integers up to $n$ avoiding a 3-term geometric progression with integral ratio (sequence \href{http://oeis.org/A230490}{A230490} in the OEIS).  The argument for the bounds for the upper density of a subset of integers avoiding such progressions in \cite{mcnewgpf} shows that $G(n)\sim bn$ for an effectively computable constant $b$ satisfying \[0.81841 < b< 0.81922.\] 
The main theorem allows us to reprove this result with an error term.

\begin{theorem} \label{thm:gpf}
For any $\epsilon>0$, $G(n)$ the size of the largest geometric progression free subset of the integers up to $n$ satisfies \[G(n) = bn\left(1+O_\epsilon                                 \left(\exp\left(-(1-\epsilon)\sqrt{\log n \log \log n}\right)\right)\right).\]
\end{theorem}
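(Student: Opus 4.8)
The plan is to express $G(n)$, the size of the largest geometric-progression-free subset of $[1,n]$, as a sum $\sum_{a=1}^n f(a,n)$ for a suitable bounded function $f$ that depends only on the connected component of $a$ in the divisor graph, and then to invoke the Main Theorem directly. The key structural observation is that a geometric progression $a, ar, ar^2$ with integral ratio $r>1$ consists of three vertices all lying in the same connected component of the divisor graph, since each term divides the next. More precisely, whether an integer $a$ can be included in a maximal (in the sense of largest) GP-free set, and how many elements such a set contributes from the component of $a$, is determined entirely by the multiplicative structure of that component inside $[1,n]$. So the natural move is to let $f(a,n)$ record the contribution of $a$ to an optimal solution, normalized so that summing over $a$ recovers $G(n)$.

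First I would recall from \cite{mcnewgpf} how the extremal GP-free set decomposes over the divisor graph. The greedy/independent structure there shows that the maximum GP-free set can be computed component by component: within each connected component one solves a local optimization (choosing the largest subset avoiding any $a,ar,ar^2$), and these local choices are independent across components because a geometric progression never bridges two components. Thus $G(n) = \sum_{\text{components } K} g(K)$, where $g(K)$ is the local maximum for the component $K$. To fit the template of the Main Theorem I would distribute the count $g(K)$ among the vertices of $K$: set $f(a,n)$ to be the number of elements that an optimal local solution selects from $K$ \emph{attributed to $a$} (for instance, $g(K)$ divided among vertices in some canonical isomorphism-invariant way, or simply assigning the full local count to the minimal vertex of $K$). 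Either normalization yields a function that is bounded (since components in $[1,n]$ have boundedly many vertices near the top, and the per-vertex contribution is at most $1$) and, crucially, depends only on the isomorphism type of the connected component of $a$, because the local optimization problem is defined purely in terms of that component's graph structure.

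Once $f$ is verified to satisfy the two hypotheses of Theorem \ref{thm:main} — boundedness $|f(a,n)|\leq A$ and dependence only on the connected component of $a$ — the theorem immediately gives
\[
G(n) = \sum_{a=1}^n f(a,n) = n C_f + O\!\left(n\exp\!\left(-\sqrt{\left(\tfrac{1}{6}-\epsilon\right)\log n \log\log n}\right)\right),
\]
and it remains only to identify the constant $C_f$ with the density $b$ from \cite{mcnewgpf}. This identification follows because $C_f = \lim_{n\to\infty} G(n)/n$ by the error term above, and that limit is exactly the upper density $b$ shown to exist in \cite{mcnewgpf}. Rewriting $nC_f + O(\cdots) = bn\bigl(1+O(\cdots)\bigr)$ gives the stated form.

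The main obstacle will be setting up $f$ correctly so that it is genuinely a function of the connected component alone, and in particular checking that the local optimization is consistent across components of the same isomorphism type regardless of where they sit in $[1,n]$. The subtlety is that a component's graph structure can in principle depend on divisibility relations to integers \emph{outside} $[1,n]$; I must confirm that the GP-free optimization is insensitive to such external structure, so that isomorphic components truly yield equal contributions. I expect this to follow from the fact that a geometric progression $a,ar,ar^2$ lies entirely within $[1,n]$ whenever its largest term does, so only the internal edges of the component matter — but this is the step that requires the most care to make rigorous, together with confirming the attribution scheme keeps $f$ bounded uniformly in $n$.
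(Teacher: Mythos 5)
Your proposal has a genuine gap, and it stems from a misreading of the hypothesis of Theorem \ref{thm:main}. That theorem requires $f(a,n)$ to depend only on the connected component of $a$ in the divisor graph of the interval $[a,n]$ --- the interval that \emph{starts at $a$} --- not the divisor graph of $[1,n]$. Your decomposition $G(n)=\sum_K g(K)$ over connected components $K$ of the divisor graph of $[1,n]$ is vacuous: that graph is connected, since the vertex $1$ is adjacent to every other vertex (and even after deleting $1$, all composites and all integers up to $n/2$ lie in one giant component). Consequently both of your attribution schemes fail the hypotheses: assigning the whole local count to the minimal vertex of the (unique) component gives $f(1,n)=G(n)\sim bn$, which is unbounded, while spreading the count evenly gives $f(a,n)\approx G(n)/n$, which is not determined by (and cannot be computed from) the component of $a$ in the divisor graph of $[a,n]$.

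The paper's proof uses a different, and essentially forced, construction: a telescoping difference. Let $G(k,n)$ denote the size of the largest GP-free subset of $[k,n]$ and set $g(k,n)=G(k,n)-G(k+1,n)$, so that $G(n)=G(1,n)=\sum_{k\leq n}g(k,n)$. This $g$ automatically satisfies both hypotheses: it takes only the values $0$ and $1$ (adjoining the single integer $k$ never lowers the maximum and can raise it by at most one), and it depends only on the component of $k$ in the divisor graph of $[k,n]$, because the GP-free optimization over $[k,n]$ decomposes as a sum over the connected components of \emph{that} graph --- each progression $a,ar,ar^2$ has consecutive terms adjacent, so it lies inside one component --- and every component not containing $k$ contributes equally to $G(k,n)$ and $G(k+1,n)$, hence cancels in the difference. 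Your instinct that the optimization localizes to components is correct, but it must be applied to the family of graphs on $[k,n]$ inside a telescoping sum, not to the single connected graph on $[1,n]$. With the telescoping $g$ in hand, Theorem \ref{thm:main} applies immediately, and the identification $b=C_f$ follows from $G(n)\sim bn$ exactly as you describe.
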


We can likewise count the geometric progression free subsets of these integers, which we denote by $H(n)$.

\begin{theorem} \label{thm:theta}
The limit $\lim_{n \to \infty} H(n)^{1/n} = \theta$ exists.  Furthermore, for any $\epsilon>0$ and $n$ sufficiently large, 
\[H(n) = \theta^{n\left(1+O_\epsilon\left(\exp\left(-(1-\epsilon)\sqrt{\log n \log \log n}\right)\right)\right)}.\] The constant $\theta$ is effectively computable and satisfies the bounds
\[1.901448 < \theta < 1.925556.\]
\end{theorem}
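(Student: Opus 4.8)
The plan is to mirror the treatment of $\alpha$, $\beta$ and $\eta$: realize $\log H(n)$ as a sum $\sum_{a=1}^n f(a,n)$ of bounded, locally-determined contributions to which the Main Theorem (Theorem \ref{thm:main}) applies, and then read off $\theta = e^{C_f}$. The starting point is that $H$ is multiplicative over the connected components of the divisor graph: any three-term geometric progression $a, ar, ar^2$ with integer ratio $r>1$ satisfies $a \mid ar \mid ar^2$, so its three terms lie in a single connected component. Hence a subset of $[1,n]$ is geometric-progression-free exactly when its intersection with each component is, and writing $\#\mathrm{GPF}(S)$ for the number of geometric-progression-free subsets of a set of integers $S$, we obtain $H(n) = \prod_{K} \#\mathrm{GPF}(K)$, the product running over the components $K$ of the divisor graph of $[1,n]$.

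To turn this product into a sum of \emph{bounded} terms I would peel off one integer at a time from the bottom. Set $G(m) = \log \#\mathrm{GPF}([m,n])$, so that $G(1) = \log H(n)$ and $G(n+1)=0$, and telescope $\log H(n) = \sum_{m=1}^n \bigl(G(m)-G(m+1)\bigr)$. Adding the vertex $m$ to $[m+1,n]$ only affects the component $\mathrm{comp}(m,[m,n])$ of $m$, which upon deletion of $m$ breaks into the components of $\mathrm{comp}(m,[m,n])\setminus\{m\}$ while every other component is untouched, so after cancellation
\[ G(m)-G(m+1) = \log \frac{\#\mathrm{GPF}\bigl(\mathrm{comp}(m,[m,n])\bigr)}{\#\mathrm{GPF}\bigl(\mathrm{comp}(m,[m,n])\setminus\{m\}\bigr)} =: f(m,n). \]
Splitting the numerator according to whether a subset contains $m$ shows this ratio equals $1 + N/\#\mathrm{GPF}(\mathrm{comp}(m)\setminus\{m\})$, where $N$ counts the geometric-progression-free subsets that do contain $m$; since deletion of $m$ injects these into the geometric-progression-free subsets of $\mathrm{comp}(m)\setminus\{m\}$ we get $N \le \#\mathrm{GPF}(\mathrm{comp}(m)\setminus\{m\})$, whence $0 < f(m,n) \le \log 2$. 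Thus $f$ is bounded, with $A = \log 2$.

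It remains to check that $f(a,n)$ depends only on the connected component of $a$ in the divisor graph of $[a,n]$, and this is the step I expect to require the most care. By construction $f(a,n)$ is a function of the single component $\mathrm{comp}(a,[a,n])$; the subtlety is that its value $\#\mathrm{GPF}$ is governed not by the bare graph structure but by which triples of vertices form geometric progressions. I would handle this by normalizing: dividing the component by $a$ identifies it with a finite set of rationals $\ge 1$ in which an edge records an integer quotient and a geometric progression is a scale-invariant triple, so the geometric-progression structure, and therefore $f$, is determined by the component together with its quotient labels, precisely the data preserved by the isomorphisms in Theorem \ref{thm:main}. Granting this, the Main Theorem yields a constant $C_f$ with $\log H(n) = \sum_{a=1}^n f(a,n) = nC_f + O\bigl(n\exp(-\sqrt{(\tfrac16-\epsilon)\log n \log\log n})\bigr)$; exponentiating gives the stated asymptotic with $\theta = e^{C_f}$, while the existence of $\lim_{n} H(n)^{1/n} = \theta$ and the effective computability of $\theta$ follow immediately from the explicit convergent series for $C_f$. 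Finally, the numerical bounds $1.901448 < \theta < 1.925556$ would be obtained, as for $\alpha$, $\beta$ and $\eta$ in Section \ref{sec:numerics}, by evaluating a partial sum of the series for $C_f$ and bounding its tail.
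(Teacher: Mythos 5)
Your proposal is correct and follows essentially the same route as the paper, whose proof of this theorem is simply declared ``identical to that of Theorem \ref{thm:alpha}'': telescope $\log H(n)$ into bounded, component-local log-ratios, apply Theorem \ref{thm:main} to get $\theta = e^{C_f}$, and obtain the numerical bounds by computing partial sums of the series for $C_f$ (the paper does this using an analogue of Observation \ref{obs:primes} and the ranges in Table \ref{tab:gpf}). The one subtlety you flag --- that the geometric-progression structure is not a bare graph-isomorphism invariant --- is resolved exactly as you suggest, because the only isomorphisms actually exploited in the proof of Theorem \ref{thm:main} are the scalings furnished by Lemma \ref{lem:isodivgra}, which preserve all quotients; strictly speaking the theorem's stated hypothesis is bare graph isomorphism, an imprecision your write-up shares with the paper itself.
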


Finally, in Section \ref{sec:proof} we conclude with several lemmas from analytic number theory which lead to the proof of the main theorem.

\section{Counting Primitive Sets} \label{sec:countprim}

We first show how the main theorem, Theorem \ref{thm:main}, can be used to estimate $Q(n)$, the count of the primitive subsets of the integers up to $n$.  We fix $n$, and count the possible primitive subsets of $\{1,2,\ldots, n\}$ by working backward from the end.  Recall that any subset of the integers in $\left(\frac{n}{2},n\right]$ is primitive.  Each element in this range can either be included or not so $Q(n) \geq 2^{n/2}$.  We can make this bigger by considering the integers in $\left(\frac{n}{3},\frac{n}{2}\right]$.   For each such $k$ in this range we can either include $k, 2k$ or neither.   Thus for each such integer there are 3 possibilities, replacing the two possibilities when only $2k$ was considered.  Thus $\displaystyle{Q(n)\geq 2^{n/2}\left(\tfrac{3}{2}\right)^{n/6} = 2^{n/3}3^{n/6} \approx 1.5131^n}$.  

We generalize this to any $k\leq n$ by defining \begin{equation}
    r(k,n) = \frac{\# \text{Primitive subsets of $[k,n]$}}{\# \text{Primitive subsets of $[k+1,n]$}} \label{eqn:rkn}
\end{equation}
to be the contribution of $k$ to our primitive set count working backward from the end.  With this definition we see that the product of the $r(k,n)$ telescopes leaving
\[Q(n) = \prod_{k=1}^n r(k,n) =\frac{\# \text{Primitive subsets of [1,$n$]}}{\# \text{Primitive subsets of }\varnothing }.\]

We can apply Theorem \ref{thm:main} by taking $f(k,n)=\log(r(k,n))$. To do so, it is necessary to see that $f(k,n)$ depends only on the connected component of $k$ in the divisibility graph of $[k,n]$.  This is easily seen however, as the inclusion of $k$ (or any integer in the interval $[k,n]$) in a primitive set depends only on whether its multiples (or divisors) are already included.  The count of the number of primitive subsets of an interval can therefore be computed as the product of the number of primitive subsets (or independent sets of vertices when viewed as a graph) of each connected component of the divisibility graph.  The contribution from each component of the divisibility graph that doesn't contain $k$ will cancel in the numerator and denominator of \eqref{eqn:rkn}.  This leaves only the ratio of the number of primitive (independent) subsets of the connected component of $k$ to the number of primitive (independent) subsets of this component with the element $k$ removed.  As this is purely a graph theoretic question, it is clear that the answer will be the same for any other isomorphic divisibility graph. 

Note also that $1\leq r(k,n) \leq 2$, since every primitive subset of $[k+1,n]$ is also a primitive subset of $[k,n]$, and the number of primitive sets could, at most, double after $k$ is considered if it is possible to append $k$ to every primitive subset of $[k+1,n]$ and still get a primitive set.  Thus we have $0 \leq f(k,n) \leq \log 2$. 

Applying Theorem \ref{thm:main} we have 
\begin{align*}
    Q(n) & = \prod_{i=1}^n r(i,n) = \exp\left(\sum_{i=1}^n f(i,n)\right) \\
    &= \exp\left(n\log \alpha +  O\left(n\exp\left(-(1-\epsilon)\sqrt{\log n \log \log n}\right)\right)\right)
\end{align*}
with \begin{align} \alpha &= \exp\left(\sum_{i=1}^\infty  \sum_{\substack{d\\ P^+(d)\leq i}} \sum_{t \in [id,(i+1)d)} \left(\frac{f(d,t)}{t(t{+}1)}\prod_{p\leq i} \frac{p{-}1}{p}\right)\right) \nonumber \\
&= \prod_{i=1}^\infty  \prod_{\substack{d\\ P^+(d)\leq i}} \prod_{t \in [id,(i+1)d)} \exp \left(\frac{f(d,t)}{t(t{+}1)}\prod_{p\leq i} \frac{p{-}1}{p}\right) \nonumber \\
&= \prod_{i=1}^\infty  \prod_{\substack{d\\ P^+(d)\leq i}} \prod_{t \in [id,(i+1)d)} r(d,t)^{\frac{1}{t(t{+}1)}\prod_{p\leq i} \frac{p{-}1}{p}}. \label{eqn:alpha} \end{align}
This proves Theorem \ref{thm:alpha}.

The proofs of Theorems \ref{thm:beta} and \ref{thm:eta} are very similar.  We define 
\begin{equation}
    s(k,n) = \frac{\# \text{Primitive subsets of $[k,n]$ of maximum size}}{\# \text{Primitive subsets of $[k+1,n]$ of maximum size}} \label{eqn:skn}
\end{equation}
and 
\begin{equation}
    w(k,n) = \frac{\# \text{Maximal primitive subsets of $[k,n]$}}{\# \text{Maximal primitive subsets of $[k+1,n]$}}. \label{eqn:tkn}
\end{equation}
It follows that these functions depend only on the connected component of $k$ in the divisibility graph of $[k,n]$ by the same argument as for $r(k,n)$.  Unlike the case of $r(k,n)$, however, some care is required to show that these functions are bounded so that Theorem \ref{thm:main} can be applied. We will show that both $1\leq s(k,n) \leq 2$ and $1\leq w(k,n) \leq 2$.

First note that if $k>\frac{n}{2}$ then $s(k,n)=w(k,n)=1$, since the only maximum or maximal subsets of $[k,n]$ is the entire set. Thus the numerators and denominators of each term of \eqref{eqn:skn} and \eqref{eqn:tkn} are all 1. (Alternatively, note that $k$ is the only vertex in the component connected to $k$ in the divisor graph of $[k,n]$, and so the element $k$ must be included.) Thus we can restrict our attention to the case $k\leq \frac{n}{2}$.

In this case, we find that any primitive subset of either $[k,n]$ or $[k+1,n]$ of maximal size has size $n/2$, and thus every maximum primitive subset of $[k+1,n]$ is also a maximum primitive subset of $[k,n]$, so $s(k,n)$ is at least one.  For $w(k,n)$ we show that every maximal primitive subset of $[k+1,n]$ is also a maximal primitive subset of $[k,n]$. Since the only neighbors of $2k$ in the divisor graph of $[k+1,n]$ are its multiples, (every divisor of $2k$ is less than $k+1$) either $2k$ or one of its multiples must be included in any maximal primitive subset of $[k+1,n]$.  Every multiple of $2k$ is also a multiple of $k$, so the set is also a maximal primitive subset of $[k,n]$.   Thus we have also that $w(k,n)$ is always at least 1.

To get upper bounds for these functions, we produce an injection from maximum (respectively maximal) primitive subsets of $[k,n]$ containing $k$ to maximum (maximal) primitive subsets of $[k+1,n]$. First note that any maximum primitive subset of $[k,n]$ that contains $k$ can be put into correspondence with a primitive subset of $[k+1,n]$ of the same size by replacing $k$ by $2k$. As every neighbor of $2k$ in the divisor graph is also a neighbor of $k$, the set will remain primitive.  This mapping is clearly injective since the step of replacing $k$ by $2k$ is easily reversed. Thus $s(k,n)\leq 2$.  For maximal primitive sets, simply replacing $k$ by $2k$ might not produce a maximal primitive subset if it is possible also to add other (odd) multiples of $k$ once $k$ is removed.  We therefore remove $k$ and instead include $pk$ for each prime number $p$ with $pk\leq n$ that can be added to the set while still producing a primitive set.  Again, it is easy to reconstruct the original set by removing the multiples of $k$ so it is clear that this again is an injection and $w(k,n)\leq 2$ as well.  

We then set 
\[M(n) = \prod_{k=1}^n s(k,n), 
\hspace{2cm} m(n) = \prod_{k=1}^n w(k,n)\]
which each telescope in the same way as the product for $Q(n)$, and apply Theorem \ref{thm:main}, proving Theorems \ref{thm:beta} and \ref{thm:eta} with 
\begin{equation} \beta = \prod_{i=1}^\infty  \prod_{\substack{d\\ P^+(d)\leq i}} \prod_{t \in [id,(i+1)d)} s(d,t)^{\frac{1}{t(t{+}1)}\prod_{p\leq i} \frac{p{-}1}{p}} \label{eqn:beta} \end{equation}

and

\begin{equation} \eta = \prod_{i=1}^\infty  \prod_{\substack{d\\ P^+(d)\leq i}} \prod_{t \in [id,(i+1)d)} w(d,t)^{\frac{1}{t(t{+}1)}\prod_{p\leq i} \frac{p{-}1}{p}}. \label{eqn:eta} \end{equation}

\section{Numerical Estimates} \label{sec:numerics}
Using the product formulas for $\alpha$, $\beta$ and $\eta$, given in \eqref{eqn:alpha}, \eqref{eqn:beta} and \eqref{eqn:eta} respectively, it is possible to compute each constant to arbitrary precision by computing sufficiently many terms in the product.  We know that the complete sum \[\sum_{i=1}^\infty  \sum_{\substack{d\\ P^+(d)\leq i}} \sum_{t \in [id,(i+1)d)} \frac{1}{t(t+1)}\prod_{p\leq i} \frac{p{-}1}{p} \ = \ 1. \]
Suppose we have lower and upper bounds $L\leq f(d,t) \leq U$ for all pairs $(d,t)$, and we compute the values of $f(d,t)$ for some subset $S \subset \{(i,d) \mid P^+(d)\leq i\}$ and all $t \in [id,(i+1)d)$.  Then we are able to numerically bound $C_f$ by \[L+\sum_{(i,d) \in S} \sum_{t \in [id,(i+1)d)} \frac{f(d,t)-L}{t(t+1)}\prod_{p\leq i} \frac{p-1}{p} \ \leq \ C_f \]
and
\[ C_f \ \leq \ U-\sum_{(i,d) \in S} \sum_{t \in [id,(i+1)d)} \frac{B-f(d,t)}{t(t+1)}\prod_{p\leq i} \frac{p-1}{p}.\]

The constants were estimated using computations done using SageMath \cite{sagemath} and code written in C++. The following observations were used during the computation to improve the speed of convergence.  
\begin{observation} \label{obs:consecutivet}
For any function $f(d,t)$ depending only on the connected component of $d$ in the the divisor graph of $[d,t]$ and fixed values of $i$ and $d$, if $t+1 \in (id,(i+1)d)$ is not an $i$-smooth number then $f(d,t)=f(d,t+1)$.
\end{observation}
This follows after noting for any prime $p>t/d$ that if $p^\nu$ divides some element in the connected component of $d$ in the divisor graph of $[d,t]$ then it divides every element of this component.  (See Lemma \ref{lem:isodivgra}.)  Because the $t+1$ is not $i$-smooth, it must be divisible by a prime $p>i$ that does not divide $d$, and hence is not part of the connected component of $d$.  Thus for each pair $i$, $d$ it is only necessary to compute $f(d,t)$ for the $i$-smooth values of $t$ in the interval $[id,(i+1)d)$.

\begin{observation} \label{obs:primes}
Suppose $f(d,t)$ depends only on the connected component of $d$ in the the divisor graph of $[d,t]$ and fix values $i,d$ and $t \in [id,(i+1)d)$. If, for some $p\leq i$, every term in the connected component of $pd$ in the divisibility graph of $[pd,pt]$ is divisible by $p$, then $f(pd,pt)=f(d,t)$.  Furthermore,  $f(p^jd,p^jt)=f(d,t)$ for all $j>0$.
\end{observation}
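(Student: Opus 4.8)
The plan is to prove the statement for every $j\ge 1$ in one stroke, by exhibiting a graph isomorphism and then invoking the fact (already established for $r$ earlier in this section) that $r(k,n)$ depends only on the connected component of $k$ in the divisor graph of $[k,n]$. Write $\mathcal{C}$ for the connected component of $d$ in the divisor graph of $[d,t]$. I would show that, under the stated hypothesis, the component of $p^j d$ in the divisor graph of $[p^j d, p^j t]$ is exactly $p^j\mathcal{C}:=\{p^j v : v\in\mathcal{C}\}$, and that the map $x\mapsto x/p^j$ is a divisor-graph isomorphism from it onto $\mathcal{C}$ carrying $p^j d$ to $d$. Since $p^j t\in[i\,p^j d,(i{+}1)p^j d)$, the quantity $r(p^j d,p^j t)$ lives in the same regime, and the isomorphism then forces $r(p^j d, p^j t)=r(d,t)$, which is simultaneously the first assertion (the case $j=1$) and the ``furthermore''.

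One inclusion is free and needs no hypothesis: any divisibility path from $d$ to $v\in\mathcal{C}$ inside $[d,t]$ lifts, upon multiplication by $p^j$, to a path from $p^j d$ to $p^j v$ inside $[p^j d, p^j t]$, so $p^j\mathcal{C}$ is connected and contained in the component of $p^j d$. The content is the reverse inclusion, for which I would argue that no vertex $p^j v$ (with $v\in\mathcal{C}$) can have a neighbour in $[p^j d, p^j t]$ lying outside $p^j\mathcal{C}$. Such a neighbour $y$ is either a multiple or a divisor of $p^j v$. If $p^j v\mid y$ then $y=p^j(vk)$ with $vk\in[d,t]$ a multiple of $v$, hence $vk\in\mathcal{C}$ and $y\in p^j\mathcal{C}$; and if $y\mid p^j v$ with $v_p(y)\ge j$, then $y/p^j$ is a divisor of $v$ lying in $[d,t]$, so again $y/p^j\in\mathcal{C}$ and $y\in p^j\mathcal{C}$.

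The one genuinely dangerous case, and the step I expect to be the crux, is a divisor $y\mid p^j v$ with $v_p(y)<j$ (in particular a divisor not divisible by $p$): here the hypothesis, which a priori constrains only the component of $pd$, must be leveraged to control the far larger interval $[p^j d, p^j t]$. Writing $y=p^a y'$ with $p\nmid y'$ and $a<j$, I would verify that $y'$ divides $v$, that $y'\ge p^{j-a}d\ge pd$, and that $y'\le v\le t$, so $y'\in[pd,pt]$ with $p\nmid y'$. Since $y'\mid v\mid pv$, and the free path-lifting above already places $pv$ in the component of $pd$ in $[pd,pt]$, the vertex $y'$ is adjacent to $pv$ and therefore also lies in that component, contradicting the hypothesis that every element of the component of $pd$ is divisible by $p$. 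This rules out the dangerous case at every level $j$ at once, which is why the hypothesis need only be assumed at the single level $pd$; it closes the reverse inclusion, completes the isomorphism, and yields both parts of the observation together.
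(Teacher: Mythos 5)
Your proof is correct, and it shares the paper's overall skeleton --- identify the component of $p^jd$ in $[p^jd,p^jt]$ with $p^j$ times the component $\mathcal{C}$ of $d$ in $[d,t]$, then invoke the component-dependence of $r$ --- but your execution of the crux step is genuinely different and, in fact, more watertight than the paper's own. For $j\geq 2$ the paper argues by contradiction from a term $a$ in the component of $p^2d$ that is \emph{not divisible by $p^2$}: it asserts that $a$ may be taken to divide a neighbour of the form $bp^2$ with $p\nmid b$, deduces $b>dp$, and then asserts that $b$ lies in the component of $pd$ in $[pd,pt]$, contradicting the hypothesis. As written, both assertions need more work: walking along a path from $p^2d$ to $a$ produces a predecessor divisible by $p^2$, not necessarily one of $p$-adic valuation exactly two, and no argument is given connecting $b$ to $pd$ inside $[pd,pt]$. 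Your decision to track the first departure from $p^j\mathcal{C}$, rather than the first failure of divisibility by $p^j$, is precisely what fills this hole: the offending vertex is then a neighbour of some $p^jv$ with $v\in\mathcal{C}$, so lifting $v$'s path by a single factor of $p$ places $pv$ in the component of $pd$, and the $p$-free part $y'$ of the offender is adjacent to $pv$, yielding the contradiction rigorously. This also lets you handle every $j\geq 1$ in one pass, whereas the paper treats only $j=2$ explicitly and leaves larger $j$ to an implicit induction, and it makes transparent why the hypothesis is needed only at the single level $pd$. The trade-off is length: the paper's sketch is shorter, but yours is the version one could cite as a complete proof.
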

\begin{proof}
If every term in the connected component of $pd$ in the divisibility graph of this interval is divisible by $p$, then it is clear that this component is isomorphic to the component obtained by dividing everything by $p$.  

Now, suppose every term in this component is divisible by $p$, and consider the interval $[p^2d,p^2t]$.  Clearly the component connected to $p^2d$ contains every term in the component of the divisibility graph of $[d,t]$ connected to $d$ multiplied by $p^2$.  Now suppose for contradiction that the connected component of $dp^2$ contained an additional term $a>dp^2$ not divisible by $p^2$.  Then this term must be a divisor of some term $bp^2$ in the connected component of $dp^2$ where $p\nmid b$ (or if not, it could be replaced by one that is, since there must exist a path in this graph connecting $a$ to $dp^2$).

This means the ratio between $a$ and $bp^2$ is divisible by $p$, and so \[bp^2/p = bp \geq a >dp^2.\]  Thus we have $b>dp$ and so $b$ would be an element of the connected component of $dp$ in the divisor graph of $[pd,pt]$ which is a contradiction. 
\end{proof}

One consequence of Observation \ref{obs:primes} is that the sum (or product) over $d$ becomes finite for values of $i\leq 4$.  For such values of $i$, any value of $d>48$ will result in a connected graph component in which every term shares a common prime factor. (In fact, one can check that the connected component of $1$ in the divisor graph of $\mathbb{Q}\cap[1,5)$ consists of the numbers $\{1,\frac{9}{8},\frac{4}{3},\frac{3}{2},\frac{27}{16},2,\frac{9}{4},\frac{8}{3},3,\frac{27}{8},4,\frac{9}{2}\}$ since only multiplications or divisions by 2 or 3 are allowed.)  Thus the challenge is to numerically estimate the terms for $i\geq 5$.

Determining the optimal order in which to consider the pairs of $i$ and $d$ in the infinite product is challenging because $r(d,t)$ (as well as the other functions considered in this paper) becomes more difficult to compute both for larger values of $d$ and $t$.  At the same time, the values of $r(d,t)$ tend toward one rapidly as $i=\lfloor\frac{t}{d}\rfloor$ gets large.  Experimentally, the best lower bounds obtained by the author were obtained by considering potential pairs $i$,$d$ in increasing order of the value of the expression $d\times i^5$.

By computing $r(d,t)$ for all pairs of $i,d$ with $d$ being an $i$-smooth integer satisfying $di^5\leq 10^8$, (as well as the extended ranges $d<11250000$ when $i=5$, $d<2400000$ for $i=6$ and $d<27440$ when $i=7$) and all $i$-smooth values of $t\in [id,(i+1)d)$, and using these terms in the product \eqref{eqn:alpha} along with the additional terms obtained by applying Observation \ref{obs:primes}, the lower bound  is obtained.  

In a similar fashion one can compute initial terms for $\beta$ and $\eta$.  The computation for $\beta$ involved computing $s(d,t)$ for all values of $d$ and $i$ satisfying $di^5<1.6\times10^9$ as well as those satisfying $di^5<3.2\times10^9$ for $i\leq 10$. For $\eta$ all values of $d$ and $i$ satisfying $di^5<3.2\times 10^7$ were taken into account, as well as those satisfying $di^5<6.4\times10^7$ for $i \leq 7$.  Again, in both computations Observations \ref{obs:consecutivet} and \ref{obs:primes} were used to speed up the computation and take into account additional pairs $d,t$.

\section{Conjectural Improved Upper Bounds} \label{sec:conj}

We give here a different method of counting primitive sets which would give a much better upper bound for the constant $\alpha$ if Conjecture \ref{conj:gmul} were proven, and may be of some independent interest.

Here we count primitive subsets of $\{1,2,\ldots, n\}$ by working forward from 1, rather than backward from $n$ as was done in the previous section.  We define $g(1)=1$ and for $k>1$ \begin{align*}
    g(k) &= \frac{Q(k)}{Q(k-1)}-1 = \frac{\# \text{Primitive subsets of $[1,k]$ that include $k$}}{\# \text{Primitive subsets of $[1,k-1]$}}.
\end{align*}

The reason we defined $g(k)$ this way, as the ratio of primitive sets up to $k$ that include the integer $k$ to the primitive sets up to $k$ that do not, is that the resulting function appears to be submultiplicative in the following sense.

\begin{conjecture} \label{conj:gmul}
    The function $g(n)$ is submultiplicative.   If (n,m)=1 then \[g(nm)\leq g(n)g(m).\] 
    Furthermore, for any prime $p$, $g(p^{i+1})\leq g(p^{i})$.
\end{conjecture}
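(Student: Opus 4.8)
The plan is to pass to a probabilistic reformulation of $g$. Adjoining $k$ to a primitive set $T\subseteq[1,k-1]$ again yields a primitive set exactly when $T$ contains no proper divisor of $k$ (there are no multiples of $k$ below $k$). Hence, writing $D_k=\{d:d\mid k,\ d<k\}$ for the set of proper divisors of $k$ and letting $T$ be a uniformly random primitive subset of $[1,k-1]$,
\[ g(k)=\Pr\!\left[\,T\cap D_k=\varnothing\,\right], \]
so $g(k)$ is the probability that a random primitive set avoids the proper divisors of $k$. Writing $A(k)=Q(k)-Q(k-1)$ for the number of primitive subsets of $[1,k]$ containing $k$, the two halves of the conjecture are, after clearing denominators, the purely combinatorial inequalities
\[ A(nm)\,Q(n-1)\,Q(m-1)\le A(n)\,A(m)\,Q(nm-1) \]
and $A(p^{i+1})\,Q(p^i-1)\le A(p^i)\,Q(p^{i+1}-1)$. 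I would try to establish each by exhibiting an injection between the corresponding families of triples of primitive sets.

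For the prime-power monotonicity the forbidden sets are nested, $D_{p^i}=\{1,p,\dots,p^{i-1}\}\subseteq\{1,p,\dots,p^{i}\}=D_{p^{i+1}}$, so on any single ground set $[1,N]$ the event $\{T\cap D_{p^{i+1}}=\varnothing\}$ is contained in $\{T\cap D_{p^{i}}=\varnothing\}$ and the comparison of probabilities is immediate. The entire difficulty is thus in reconciling the two different ground sets $[1,p^{i+1}-1]$ and $[1,p^{i}-1]$ appearing in the normalizations of $g(p^{i+1})$ and $g(p^i)$; the first step would be to quantify how much the avoidance probability changes when the universe is enlarged, and to show this change at least compensates for the extra forbidden power $p^{i}$.

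For the submultiplicative bound the natural heuristic is a negative-correlation inequality. When $\gcd(n,m)=1$ we have $D_n\cap D_m=\{1\}$ (essentially disjoint coordinate sets) and $D_{nm}\supseteq D_n\cup D_m$, so on a common ground set avoiding $D_{nm}$ is at most as likely as the intersection $\{T\cap D_n=\varnothing\}\cap\{T\cap D_m=\varnothing\}$. If the uniform measure on primitive (independent) sets were negatively associated, then these two decreasing events, depending on nearly disjoint coordinates, would satisfy $\Pr[\text{both}]\le\Pr[\cdot]\,\Pr[\cdot]$, and after separately handling the shared element $1$ one would recover $g(nm)\le g(n)g(m)$ over that common ground set.

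The hard part will be twofold, and is presumably why the statement remains a conjecture. First, the uniform measure on independent sets is the hard-core model at fugacity one, for which negative association is not known and can in fact fail on general graphs; the desired correlation inequality would have to be proved from the special divisor structure rather than taken off the shelf. Second, and more seriously, $g(n)$, $g(m)$, and $g(nm)$ are each normalized over a \emph{different} interval, and enlarging the universe tends to \emph{increase} the avoidance probability (the extra integers dilute the forbidden divisors), so the ground-set correction runs opposite to what a naive renormalization of a common-ground-set correlation bound would need. Overcoming this seems to require a single injection that simultaneously respects all three normalizations in the combinatorial inequalities above, and constructing such a map is, I expect, the central obstacle.
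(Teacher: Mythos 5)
You should first be aware that the paper does not prove this statement at all: it is posed as Conjecture \ref{conj:gmul}, and the author's only evidence is a numerical verification for all $n<899$. So there is no proof in the paper to compare yours against, and the bar your attempt has to clear is actually resolving an open problem.

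Your reformulation is correct as far as it goes: since no multiple of $k$ lies below $k$, a primitive $T\subseteq[1,k-1]$ stays primitive after adjoining $k$ exactly when $T$ avoids the proper divisors $D_k$, so indeed $g(k)=\Pr\left[T\cap D_k=\varnothing\right]$ for $T$ uniform over primitive subsets of $[1,k-1]$, and clearing denominators gives the equivalent inequalities $A(nm)\,Q(n-1)\,Q(m-1)\le A(n)\,A(m)\,Q(nm-1)$ and $A(p^{i+1})\,Q(p^i-1)\le A(p^i)\,Q(p^{i+1}-1)$. But a reformulation is not a proof, and the two steps you defer are precisely the entire content of the conjecture. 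The part you do establish --- that on a \emph{common} ground set the event of avoiding $D_{p^{i+1}}$ is contained in the event of avoiding $D_{p^i}$, since $D_{p^i}\subseteq D_{p^{i+1}}$ --- is the trivial part; the conjecture compares probabilities under \emph{different} normalizations ($[1,p^{i+1}-1]$ versus $[1,p^i-1]$, and $[1,nm-1]$ versus $[1,n-1]$, $[1,m-1]$), and as you yourself note, the ground-set enlargement pushes the avoidance probability in the unfavorable direction, so the easy containment does not even give the inequality up to a correction of known sign. Likewise, negative association for the uniform measure on independent sets (the hard-core model at fugacity $1$) is false on general graphs, so invoking it would require extracting and proving a correlation inequality specific to the divisor-graph structure --- no such argument is sketched. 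In short: the probabilistic framing is sound and may be a useful lens, but both pillars of the argument are missing, and what remains unproven in your proposal is exactly what remains unproven in the paper.
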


The conjecture has been verified for all values of $n<899$.  

If one assumes the conjecture, then a substantially improved upper bound for $\alpha$ can be computed. Using the computed values of $g(n)$, $n<899$, and taking into account all 31-smooth numbers gives the upper bound $\alpha <1.573487$.

\section{The median size of a primitive subset}
    
The computations obtained in the previous section will also allow us to give upper and lower bounds for $\nu(n)$, the median size of a primitive subset of the integers up to $n$.  First we need a lemma (see for example \cite[solved exercise 9.4]{knuth}) on sums of binomial coefficients, which, given a set of size $n$, counts the number of subsets of size at most $\lambda n$.
    \begin{lemma} \label{lem:sumbin}
    Fix $0 <\lambda < \frac{1}{2}$.  The partial sum of binomial coefficients satisfies
    \[\sum_{i=0}^{\lfloor \lambda n \rfloor} \binom{n}{i} = 2^{nh(\lambda)+O(\log n)}\]
    where $h(\lambda)=-\lambda\log_2 \lambda +(\lambda-1)\log_2(1-\lambda)$ and $\log_2$ is the logarithm base 2.
    \end{lemma}

Using this, and the numerical computations of $\alpha$ and $\beta$, we can bound the median size of a primitive subset of $\{1,2,\ldots, n\}$.

\begin{proof}[Proof of Theorem \ref{thm:median}]
Let $\nu(n)$ denote the median size of the primitive subsets of the integers up to $n$.  Since half of the primitive subsets of this set must have size at most $\nu(n)$, and the total number of primitive subsets is approximately $\alpha^n$, we can obtain a lower bound for $\nu(n)$ by supposing that all small subsets of the integers up to $n$ are primitive.  Combining this with Lemma \ref{lem:sumbin} we get that  

\[\sum_{i=0}^{\nu(n)} \binom{n}{i} = 2^{nh(\nu(n)/n)+O(\log n)} \geq \frac{1}{2}Q(n) =  \alpha^{n\left(1+O_\epsilon\left(\exp\left(-(1-\epsilon)\sqrt{\log n \log \log n}\right)\right)\right)}. \]

Taking logs gives \[h(\nu(n)/n) \geq \frac{\log \alpha }{\log 2} +O_\epsilon\left(\exp\left(-(1-\epsilon)\sqrt{\log n \log \log n}\right)\right)\] or \[\nu(n) \geq nH^{-1}\left(\frac{\log \alpha }{\log 2}\right)(1+o(1)).\]

Using the lower bound for $\alpha$ gives $\nu(n) \geq 0.168153n$ for sufficiently large $n$. To get an upper bound, we must use in addition the lower bound obtained for the constant $\eta$.  Every primitive set is contained in a maximal primitive set (but not necessarily a maximum one).  We count those primitive sets obtained by starting with a maximal primitive set $S$ and removing some subset $T$ from it.  Since half of the primitive subsets up to $n$ have size at least $\nu(n)$, and every maximal primitive set has size at most $\frac{n}{2}$, it must be possible to construct at least half of the primitive subsets up to $n$ by starting with a maximal primitive set and removing at most $\frac{n}{2}-\nu(n)$ terms from it.  Thus the median size of a primitive subset must be sufficiently less than $\frac{n}{2}$ to produce $\frac{1}{2}Q(n)$ sets in this way.  In particular we have the inequality \[m(n)\sum_{i=0}^{\frac{n}{2}-\nu(n)} \binom{n/2}{i}\geq \frac{1}{2}Q(n).\]
Using Lemma \ref{lem:sumbin} and Theorems \ref{thm:alpha} and \ref{thm:eta}, and taking logs, gives 
\[h\left(1-\frac{2\nu(n)}{n}\right) \geq \frac{2(\log \alpha-\log \eta)}{\log 2} +O_\epsilon\left(\exp\left(-(1-\epsilon)\sqrt{\log n \log \log n}\right)\right)\]
so that $\nu(n)\leq \frac{n}{2}\left(1-h^{-1}\left(\frac{2(\log \alpha-\log \eta)}{\log 2}\right)+o(1)\right)$.  Using the lower bound for $\alpha$, and the upper bound for $\eta$, we get $\nu(n) \leq 0.417739n$ for sufficiently large $n$.
\end{proof}

\section{An optimal path covering of the divisor graph} \label{sec:pathcov}
As mentioned in the introduction, it is known due to Mazet \cite{mazet} and Chadozeau \cite{chad} that $C(n)$, the minimum number of disjoint paths necessary to cover the divisor graph of the integers up to $n$ satisfies
\[C(n)=cn\left(1 +O\left(\frac{1}{\log \log n \log \log \log n}\right)\right).\]
for a constant $0.1706\leq c\leq 0.2289$.
Using our main theorem we show (Theorem \ref{thm:pathcover})
 \[C(n) = cn\left(1+O_\epsilon\left(\exp\left(-(1-\epsilon)\sqrt{\log n \log \log n}\right)\right)\right)\]  and $0.176448<c$. 

\begin{proof}[Proof of Theorem \ref{thm:pathcover}]
As in previous examples, we set up a telescoping sum, however in this case there is no need to take logs.  Define  \[V(k,n)=\#\{\text{disjoint paths needed to cover the divisor graph of }[k,n]\} \] and define $v(k,n)=V(k,n)-V(k+1,n)$ so that \[C(n)=V(1,n) = \sum_{k=1}^n v(k,n).\]  Note that $v(k,n) \in \{-1,0,1\}$, since in the worst case scenario, we can cover the additional vertex $k$ using a single new path, and in the best case scenario, we are able to join paths from two different components of the divisor graph of $[k+1,n]$ through the vertex $k$.  Since $k$ can only be part of one path however, it is never possible to decrease the number of paths required by more than 1.  

It is also clear that $v(k,n)$ depends only on the connected component of $k$ in the divisor graph of $[k,n]$, so we can apply the main theorem to $C(n)$, which completes the proof, with 
\[c= \sum_{i=1}^\infty  \sum_{\substack{d\\ P^+(d)\leq i}} \sum_{t \in [id,(i+1)d)} \left(\frac{v(d,t)}{t(t{+}1)}\prod_{p\leq i} \frac{p{-}1}{p}\right). \]
Note that $v(1,1)=1$, but is otherwise nonpositive for any of the terms included in the sum above.  (If $i\geq 2$, then when computing $v(d,t)$, the element $2d$ is a vertex of the graph, and any neighbor of $2d$ is also a neighbor of $d$.  Thus it is always possible to cover the vertex $d$ by including it in the same path that is used to cover $2d$.)

For small values of $d$ and $t$ it seems that $v(d,t)$ is generally equal to $-1$ unless $d$ is divisible by $6$ in which case it is generally 0.  This pattern becomes gradually less pronounced as the values of $d$ and $t$ become larger.  Values of $v(d,t)$ were computed in the range $id^5<4\times 10^8$ as well as the extended range $id^5<8\times 10^8$ for $d\leq 825$. Along the way a small number of pairs were skipped when the computation time of the value of $f(d,t)$ exceeded a predefined timeout. Using the bounds $-1 \leq v(d,t)\leq 0$ for all of the values that were not computed we obtained the bounds $0.190913<c<0.217838$.
\end{proof}

\section{Geometric Progression Free Sets} \label{sec:gpf}  
Geometric progression free sets are similar to primitive sets, in fact one can characterize primitive sets as those avoiding geometric progressions of length 2. So it is not surprising that the methods similar to those used to study primitive sets apply to this situation as well.  In this situation it isn't a divisibility relation between two integers that matters, but rather a relationship among three integers $a,ar$ and $ar^2$.  In place of the divisor graph, we instead consider a geometric-progression hypergraph in which 3 vertices are connected by a hyperedge if the corresponding vertices are in geometric progression.

Unlike primitive sets, we don't know precisely the maximum size $G(n)$ of a subset of $\{1,2,\ldots, n\}$ avoiding three term geometric progressions with integral ratio.  It is known that $G(n) \sim bn$ for some constant $b$ with $0.81841<b<0.81922$, and that $b$ is effectively computable.  While we don't improve these bounds using the current method, we do obtain an improved error term  $G(n) = bn\left(1+O_\epsilon\left(\exp\left(-(1-\epsilon)\sqrt{\log n \log \log n}\right)\right)\right)$ for any $\epsilon>0$ in Theorem \ref{thm:gpf}.
\begin{proof}
While the underlying structure in this case is a geometric-progression hypergraph, rather than the divisor graph, one can easily check that the proof of Lemma \ref{lem:isodivgra} applies just as well when the divisor graph is replaced by a geometric-progression hypergraph.\footnote{In fact,in Lemma \ref{lem:isodivgra},  for a geometric-progression hypergraph, we can take $i=\left\lfloor\sqrt{\tfrac{n}{a}}\right\rfloor$ instead of $\left\lfloor\frac{n}{a}\right\rfloor$.  Using this instead in the proof of the main theorem, taking intervals of the form $\left(\frac{n}{(i+1)^2},\frac{n}{i^2}\right]$ but otherwise proceeding the same, we can get a small further improvement to the error term.  The error term $E_1$ is the limiting factor and the resulting final error term is $O_\epsilon\left(An\exp\left(-\left(\frac{3}{2\sqrt{2}}-\epsilon\right)\sqrt{\log n \log \log n}\right)\right)$.}  The proof of the main theorem then applies just as well to functions which depend only on the geometric-progression hypergraph of the interval $[d,t]$.

Now, we let $G(d,t)$ denote the size of a geometric progression free subset of the integers $[d,t]$ of maximal size, and let $g(d,t) = G(d,t)-G(d+1,t)$, so that \[G(n) = G(1,n) = \sum_{k\leq n} g(k,n).\]
Now $g(d,t)$ depends only on the geometric-progression hypergraph of the interval $[d,t]$, and only ever takes the values 0 or 1, and the result follows.
\end{proof}

We can likewise use the main theorem applied to geometric-progression hypergraphs to count (Theorem \ref{thm:theta}) the number $H(n)$ of subsets of the integers up to $n$ that avoid 3-term-geometric progressions with integral ratio.  The proof is nearly identical to that of Theorem \ref{thm:alpha}.  To obtain the bounds on the constant $\theta$, we find that the analogous version of Observation \ref{obs:primes} for geometric progressions allows us to take into account the contribution from all values of $d$ for $1\leq i \leq 11$, as well as the contribution from those values of $d$ up to those listed in Table \ref{tab:gpf} in order to get the bounds $1.901448 < \theta < 1.925556$.

\section{Questions} \label{sec:questions}

Despite the wide range of problems that can be tackled using the main theorem, this work leaves several open questions that will require new techniques.  In addition to the conjecture described in Section \ref{sec:conj}, we pose two questions that seem interesting enough to study further.

\begin{question}
Is the median size $\nu(n)$ of the primitive subsets of the integers up to $n$ asymptotic to $vn$ for some constant $v$? If so, is there an algorithm to compute $v$ to arbitrary precision?
\end{question}

In Section \ref{sec:gpf} we considered sets that avoided 3-term geometric progressions with integral ratio.  The results in \cite{mcnewgpf} imply that the largest subset of the integers up to $n$ avoiding 3-term geometric progressions with \textit{rational} ratio is also asymptotic to a different effectively computable constant times $n$.
\begin{question}
Is it possible to give an error term analogous to Theorem \ref{thm:gpf} for the size of a subset of the integers up to $n$ of maximal size avoiding 3-term geometric progressions with rational ratio? Can we count the number of such subsets analogously to Theorem \ref{thm:theta}?
\end{question}

\section{Proof of the Main Theorem} \label{sec:proof}

It remains to prove the main theorem upon which the results of this paper rely.  First we need some notation and lemmas from analytic number theory. Denote by $\Psi(x,y)=\#\{n\leq x\mid P^+(n)\leq y\}$ the number of $y$-smooth integers up to $x$.  We will frequently use the upper bound for smooth numbers, 
\[\Psi(x,y) \ll x \exp\left((-1+o(1))\frac{\log x}{\log y}\log \frac{\log x}{\log y}\right) \]
valid for $y\geq \left(\log x\right)^{1+\epsilon}$, as well as the uniform bound 
\[\Psi(x,y) \ll_\epsilon x \exp\left(\frac{\log x}{\log y}\log \frac{\log x}{\log y}\right) +x^\epsilon\] valid for all $x,y \geq 2$ and $\epsilon>0$ (see \cite[Section III.5]{IAPNT}).  On the other hand a number $n$ is called $y$-rough if $P^-(n)>y$.

We will need the following estimates of Tenenbaum \cite{Tenenbaum-LFC} for the function $\Theta(x,y,z)$ which counts integers $n\leq x$ with a $y$-smooth divisor  $d>z$, as well as the related function $$S(y,z)= \sum_{\substack{d>z\\P^+(d)\leq y}} \frac{1}{d}.$$
    \begin{lemma} \label{lem:recipsmooth} For all $\epsilon>0$, $x, y\geq 2$ and $z<\exp \exp (\log y)^{(3/5-\epsilon)}$
    \[\Theta(x,y,z) = (1+o(1))xS(y,z)\prod_{p<y}\frac{p-1}{p} \ll x\exp\left((-1+O(1))\frac{\log z}{\log y}\log \frac{\log z}{\log y}\right).\]
\end{lemma}

We will also need to bound the number of $y$-rough integers in an interval.
    
    \begin{lemma} \label{lem:sieveinterval}
    Let $\epsilon>0$, $I$ an interval of length $X>y>2$ and define $u=\frac{\log X}{\log y}$. The number of integers in $I$ free of prime divisors up to $y$ is \[X\prod_{p\leq y}\frac{p-1}{p}+O\left(X\exp\left((-1+o(1))u\log u\right)\right)  +O_\epsilon(X^\epsilon)\]
    as $u \to \infty$.
    
    \end{lemma}

    \begin{proof}
    This follows from the ``Fundamental Lemma'' of Brun's sieve, see for example Theorem 6.12 of \cite{FI}. Taking the level of distribution to be $X$, the number of such integers in this interval is \[X\prod_{p\leq y}\frac{p-1}{p}\left(1+O_\epsilon\left(\exp\left((-1+o(1))u\log u\right)\right) \right) +\sum_{\substack{d\leq X\\P^{+}(d)\leq y}}O(1).\]  Now we can bound the second error term above by $\Psi(X,y) \ll_\epsilon Xu^{-u}+X^\epsilon$  and the result follows by approximating $\prod_{p\leq y} \frac{p-1}{p} = O\left(\frac{1}{\log y}\right)$.
    \end{proof}
    
    Finally, we will use the following lemma to characterize the connected component of the divisor graph of an interval.
    
        \begin{lemma} \label{lem:isodivgra}
    Fix a positive integer $n$, suppose that $0<a\leq n$, and set $i= \left\lfloor \frac{n}{a}\right\rfloor$.  Let $d|a$ be the largest $i$-smooth divisor of $a$, and $\ell =\frac{a}{d}$ the ``$i$-rough'' part of $a$.  Finally let $t=\left\lfloor \frac{n}{\ell}\right\rfloor$.  Then the connected component of $a$ in the divisor graph of $[a,n]$ is isomorphic to the connected component of $d$ in the divisor graph of $[d,t]$ (with the vertex of $a$ corresponding to the vertex of $d$). 
    \end{lemma}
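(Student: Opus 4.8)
The goal is to show that multiplying every vertex in the connected component of $a$ in $[a,n]$ by $\ell^{-1}$ (equivalently, stripping off the $i$-rough part $\ell$ of $a$) gives a graph isomorphism onto the component of $d$ in $[d,t]$. The natural candidate map is $\phi(m) = m/\ell$. My plan is to prove three things in order: first, that every vertex in the component of $a$ is divisible by $\ell$, so that $\phi$ actually lands in the integers; second, that $\phi$ maps the component of $a$ into the interval $[d,t]$ and preserves the divisibility-edge relation in both directions; and third, that $\phi$ is a bijection between the two components, with the inverse being multiplication by $\ell$.

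\emph{Step 1 (the rough part $\ell$ is common to the whole component).} Here the key observation is the definition $i = \lfloor n/a\rfloor$, which means $a, 2a, \ldots, ia \leq n < (i+1)a$, so the only multiples of $a$ inside $[a,n]$ are $a, 2a, \ldots, ia$. More generally, if $m$ is any vertex in the component and $m'$ is a neighbor with $m \mid m'$, then $m'/m \leq n/a \leq i$, and if $m' \mid m$ similarly the ratio is at most $i$; so every edge of the component multiplies or divides by a factor that is $i$-smooth (indeed at most $i$, hence a product of primes $\leq i$). Starting from $a$, whose $i$-rough part is $\ell$, and walking along any path, the rough part can never change because each edge introduces or removes only primes $\leq i$. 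I would make this precise by an induction on the distance from $a$ in the component: the $i$-rough part of every vertex equals $\ell$. This is the crux of the argument and the step I expect to be the main obstacle, since one must argue carefully that a ratio bounded by $i$ is $i$-smooth and that connectivity forces the rough part to be invariant.

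\emph{Step 2 (interval and edges are preserved).} Once every vertex $m$ in the component is divisible by $\ell$ with $i$-rough part exactly $\ell$, the image $\phi(m) = m/\ell$ is an $i$-smooth integer (well, its rough part is $1$), and $\phi(a) = d$. Since $m \geq a \geq \ell$ gives $\phi(m) \geq d$, and $m \leq n$ gives $\phi(m) = m/\ell \leq n/\ell$, hence $\phi(m) \leq \lfloor n/\ell\rfloor = t$, the image lies in $[d,t]$. Divisibility is visibly preserved by $\phi$ and by its candidate inverse (multiplication by $\ell$), because $u \mid v \iff u\ell \mid v\ell$; so $\phi$ is a graph homomorphism preserving edges in both directions.

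\emph{Step 3 (bijection onto the component of $d$).} It remains to check that $\phi$ surjects onto the component of $d$ in $[d,t]$. The inverse map $\psi(x) = x\ell$ sends $d$ to $a$ and, by the same edge-preservation, sends any path in the component of $d$ to a path in $[a,n]$ starting at $a$; I must verify that $\psi$ keeps us inside $[a,n]$. Since every vertex $x$ in the component of $d$ is $i$-smooth and connected to $d$, the same rough-part/ratio argument bounds $x \leq t = \lfloor n/\ell\rfloor$, so $x\ell \leq n$, and $x \geq d$ gives $x\ell \geq a$; thus $\psi$ maps the component of $d$ into $[a,n]$. Since $\phi$ and $\psi$ are mutually inverse and each preserves adjacency, they restrict to an isomorphism between the two connected components matching $a$ with $d$, which is exactly the claim.
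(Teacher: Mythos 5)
Your proposal is correct and takes essentially the same route as the paper's proof: every edge ratio in the component of $a$ is an integer at most $i$, hence $i$-smooth, so the $i$-rough part $\ell$ is constant on the component, and division by $\ell$ (with inverse multiplication by $\ell$) gives an edge-preserving bijection onto the component of $d$ in $[d,t]$. One small slip to fix: the chain $m'/m \leq n/a \leq i$ is not valid since $n/a$ may exceed $i$; you need $m'/m \leq n/a < i+1$ together with the integrality of the ratio to conclude $m'/m \leq i$.
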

    \begin{proof}  Suppose that $a\leq b<c\leq n$ are any two connected vertices in the connected component of $a$ in this divisor graph, and let $r=\frac{c}{b}$ be the (necessarily integral) ratio between them.  Since $r\leq \frac{n}{a}<i+1$, the ratio $r$ cannot have any prime factors greater than $i$.  So $b$ and $c$ are divisible by all of the same prime factors greater than $i$ to the same powers.  
    
    Recall that $\ell$ is divisible only by primes greater than $i$ and $\ell|a$ so $\ell$ divides all of the integers in the connected component of $a$.  We defined $t=\left\lfloor \frac{n}{\ell}\right\rfloor$ so that $t\ell $ is the largest integer less than or equal to $n$ divisible by $\ell$, and thus the largest number from this interval that could possibly be part of the component of the divisor graph connected to $a$.  So the connected component of $a$ in $[a,n]$ is the same as the connected component of $a$ in $[a,t\ell]$.
    
    Now we can divide each integer in this connected component by $\ell$, and see that the connected component of $a$ in $[a,t\ell]$ is the same as the connected component of $\left[\frac{a}{\ell},t\right]=[d,t]$ with the isomorphism being multiplication by $\ell$. 
    \end{proof}

    \begin{proof}[Proof of main theorem]
    Fix $\epsilon>0$, $A\geq 0$ and suppose $f(k,n)$ is bounded in absolute value by $A$ and depends only on the connected component of $k$ in the divisor graph of $[k,n]$. Our goal is to estimate $\sum_{k=1}^n f(k,n)$ as $nC_f$ where 
    \begin{equation}C_f = \sum_{i=1}^\infty  \sum_{\substack{d\\ P^+(d)\leq i}} \sum_{t \in [id,(i+1)d)} \left(\frac{f(d,t)}{t(t+1)}\prod_{p\leq i} \frac{p-1}{p}\right). \label{eq:cf} \end{equation}

Our goal is to use Lemma \ref{lem:isodivgra} to group together the equal terms in this sum.  First, we truncate the sum, removing those $k\leq \frac{n}{N+1}$ for some parameter $N$ to be chosen later.  This allows us to group together those $k$ having the same value of $i=\left\lfloor \frac{n}{k} \right\rfloor$ for $i\leq N$.

\begin{align*}
    \sum_{k=1}^n f(k,n) &= \sum_{k=\frac{n}{N+1}}^n f(k,n) +O\left(\frac{An}{N}\right) \\
    &= \sum_{i=1}^N\sum_{k\in(\frac{n}{i+1},\frac{n}{i}]} f(k,n) +O\left(\frac{An}{N}\right).
\end{align*}
We now omit those values of $k$ whose largest $i$-smooth divisor is greater than some parameter $M$ to be determined.  In doing so, we introduce an error term $E_1$ to account for the omitted terms.  We then group together those values of $k$ in the sum above having the same largest $i$-smooth divisor $d$.

\begin{align}
     \sum_{k=1}^n f(k,n) &= \sum_{i=1}^N \left( \sum_{\substack{k\in(\frac{n}{i+1},\frac{n}{i}] \\  d|k, P^+(d)\leq i \Rightarrow d\leq M}} \hspace{-5mm}f(k,n) \right)+ E_1 +O\left(\frac{An}{N}\right) \nonumber \\
    &= \sum_{i=1}^N\left(\sum_{\substack{d\leq M \\ P^+(d) \leq i }} \sum_{\substack{k\in(\frac{n}{i+1},\frac{n}{i}]\\ d|k \\\  P^{-}\left(\frac{k}{d}\right) > i}} f(k,n) \right) + E_1 +O\left(\frac{An}{N}\right). \label{eq:truncsum}
\end{align}

The error term $E_1$ accounts for the contribution from those terms in each interval $(\frac{n}{i+1},\frac{n}{i}]$ having an $i$-smooth divisor greater than $M$, so  
\[E_1 \leq A\sum_{i=1}^N\left(\Theta\left(\tfrac{n}{i},i,M\right)-\Theta\left(\tfrac{n}{i+1},i,M\right)\right)\]  (recall $\Theta(\tfrac{n}{i},i,M)$ counts  integers up to $\tfrac{n}{i}$ with an $i$-smooth divisor exceeding $M$).

In the main term sum of \eqref{eq:truncsum} we group those values of $k$ with the same value of $t=\left\lfloor\frac{nd}{k}\right\rfloor$.  For fixed $i$ and $d$, the possible values of $t$ are thus each of the integers in the interval $[id,(i+1)d)$.  Doing so, this sum becomes
\begin{align}
        \sum_{i=1}^{N}  \sum_{\substack{d<M\\ P^+(d)\leq i}} \sum_{t \in [id,(i+1)d)} &
        \sum_{\substack{k \in \left(\frac{dn}{t+1},\frac{dn}{t}\right]\\ d\mid k \\ P^{-}\left(\frac{k}{d}\right)> i}} f(k,n) \nonumber \\ 
        &=\sum_{i=1}^{N } \sum_{\substack{d<M\\ P^+(d)\leq i}} \sum_{t \in [id,(i+1)d)}\left( f(d,t)
        \sum_{\substack{\ell \in \left(\frac{n}{(t+1)},\frac{n}{t}\right] \\ P^-(\ell)> i}} 1  \right). \label{eq:transformedsum}
\end{align}
Here we have obtained the second line above from the first using Lemma \ref{lem:isodivgra}.

The innermost sum above counts integers sifted of primes up to $i$ in the interval $\left(\frac{n}{(t+1)},\frac{n}{t}\right]$ which has size $\frac{n}{t(t+1)}$.  So, as long as both $N$ and $M$ are chosen not too large (meaning that $t<NM$ is not too large), these intervals are sufficiently long to apply Lemma \ref{lem:sieveinterval} to approximate the count of such integers.  Doing so gives  
        
    \begin{align}
        \sum_{\substack{\ell \in \left(\frac{n}{(t+1)},\frac{n}{t}\right] \\ P^-(\ell)> i}} 1 &= 
        \frac{n}{t(t{+}1)}\left(\prod_{p<i} \tfrac{p-1}{p}+O\left(\exp\left((-1{+}o(1))u_t \log u_t \right)\right)\right) +O_{A,\epsilon}\left(\frac{n^\epsilon}{t^{2\epsilon}}\right)\nonumber
    \end{align}
as long as $u_t = \frac{\log \frac{n}{t(t+1)}}{\log i}$, corresponding to the length of the interval in which Lemma  \ref{lem:sieveinterval} was applied, tends to infinity. Thus it will be necessary to choose $N, M$ such that \begin{equation}
            \log N =o\left(\log \frac{n}{M^2}\right) \label{ineq:utinfty}
        \end{equation}
        as $n\to \infty$.  Inserting this expression into \eqref{eq:transformedsum} and introducing another error term $E_2$ that expression becomes 
        \begin{align}
        \sum_{i=1}^{N }
        \sum_{\substack{d<M\\ P^+(d)\leq i}} \sum_{t \in [id,(i+1)d)}
        {\frac{f(d,t)n}{t(t+1)}\prod_{p<i} \frac{p-1}{p}} \ + \  E_2  \label{eqn:firstapprox}
        \end{align}
        where 
        \begin{align}
            E_2 &\ll_\epsilon A\sum_{i=1}^N \sum_{\substack{d<M\\ P^+(d)<i}} \sum_{t \in [id,(i+1)d)} \hspace{-2mm} \left(
        \frac{n}{t^2}\exp\left((-1{+}o(1))u_t \log u_t \right) + \frac{n^\epsilon}{t^{2\epsilon}}\right) \nonumber \\
        &\ll_\epsilon A\sum_{i=1}^N \hspace{-1mm}\sum_{\substack{d<M\\ P^+(d)<i}}  \hspace{-2.5mm}\left(
        \frac{n}{di^2}\exp\left((-1{+}o(1))\frac{\log\frac{n}{i^2d^2}}{\log i} \log\frac{\log\frac{n}{i^2d^2}}{\log i}\right) + \frac{dn^\epsilon}{(id)^{2\epsilon}}\right). \label{eq:e2}
        \end{align}
        We now extend both of the initial two sums in the main term of \eqref{eqn:firstapprox} to infinity, so that we can replace it with $nC_f$, defined in \eqref{eq:cf}. This introduces another error term of the form $O\left(\frac{An}{N}\right)$, to account for all of the new terms $i>N$ in the now infinite sum, and also an error term $E_3$ to account for those additional terms where $i\leq N$ but $d\geq M$.  This gives \[\sum_{i=1}^{N }
        \sum_{\substack{d<M\\ P^+(d)<i}} \sum_{t \in [id,(i+1)d)}
        {\frac{f(d,t)n}{t(t+1)}\prod_{p<i} \frac{p-1}{p}} \ = \ nC_f \ - \ E_3+O\left(\frac{An}{N}\right)\]
        where 
         \begin{align*}
            E_3 = \sum_{i=1}^N  \sum_{\substack{d\geq M\\ P^+(d) \leq i}} \sum_{t \in [id,(i+1)d)} \frac{An}{t(t+1)}\prod_{p\leq i}\frac{p-1}{p} & \leq \sum_{i=1}^N\frac{An}{i^2 } \prod_{p\leq i}\frac{p-1}{p} \sum_{\substack{d\geq M\\ P^+(d)\leq i}} \frac{1}{d} \\
            &= \sum_{i=1}^N\frac{An}{i^2} \prod_{p\leq i}\frac{p-1}{p}S(i,M).
        \end{align*}   
        
        Thus we can now write 
        \begin{equation} \sum_{k=1}^n f(k,n) = nC_f \  + E_1 \ + \ E_2 \ - \ E_3 + O\left(\frac{An}{N}\right) \label{eq:allerrorterms} \end{equation}
        and it remains only to optimize the size of these error terms.

        We start with $E_1$.  Set $B:=\left\lceil\exp((\log \log M)^2)\right\rceil$. We take the sum and remove the initial terms $i<B$ from the sum, which we bound trivially using Lemma \ref{lem:recipsmooth}. (Note that since $B\geq \exp((\log \log M)^2)$, $M\leq \exp \exp \sqrt{\log B}$, satisfying the conditions of that lemma.)
        \begin{align*}
            A\sum_{i=1}^{B-1}
            \Big(\left(\Theta\left(\tfrac{n}{i},i,M\right)-\Theta\left(\tfrac{n}{i},i-1,M\right)\right)\Big) &\ll A\Theta\left(n,B,M\right)   \\
            &= An\exp\left((-1+o(1))\frac{\log M}{\log \log M}\right).
        \end{align*}
        Now, we regroup the terms in the rest of the sum in order to write  
        \begin{align*}
            A&\sum_{i=B}^N \Big(\left(\Theta\left(\tfrac{n}{i},i,M\right)-\Theta\left(\tfrac{n}{i+1},i,M\right)\right)\Big) \\
            &= A\Theta\left(\tfrac{n}{B},B,M\right)+A\hspace{-2mm}\sum_{i=B+1}^N \Big(\Theta\left(\tfrac{n}{i},i,M\right)-\Theta\left(\tfrac{n}{i},i{-}1,M\right)\Big) -A\Theta\left(\tfrac{n}{N+1},N,M\right)\\
            &\leq A\Theta\left(\tfrac{n}{B},B,M\right) +A\hspace{-2mm}\sum_{i=B+1}^N \Big(\Theta\left(\tfrac{n}{i},i,M\right)-\Theta\left(\tfrac{n}{i},i-1,M\right)\Big). 
        \end{align*}
        The first term above is $\ll An\exp\left((-1+o(1))\frac{\log M}{\log \log M}\right)$ for the same argument as before.   %$$\Theta\left(\tfrac{n}{i},i,M\right)-\Theta\left(\tfrac{n}{i},i-1,M\right)$$ 
        The term inside the sum above counts integers having an $i$ smooth divisor greater than $M$, but not an $(i-1)$-smooth divisor, meaning this will be nonzero only if $i$ is prime. When $i=p$ is prime, this counts integers up to $\frac{n}{p}$ divisible by $p$, having a $p$-smooth divisor greater than $M$.  Dividing out the factor of $p$ we find that the contribution from this term will be $\Theta\left(\tfrac{n}{p^2},p,\frac{M}{p}\right)$.  Thus we have 
        \begin{align*}
            A\sum_{i=B+1}^N \Big(\Theta\left(\tfrac{n}{i},i,M\right)&-\Theta\left(\tfrac{n}{i},i-1,M\right)\Big) = A\sum_{B+1\leq p \leq N} \Theta\left(\tfrac{n}{p^2},p,\frac{M}{p}\right) \\
            & \leq A\sum_{i=B+1}^N \Theta\left(\tfrac{n}{i^2},i,\tfrac{M}{i}\right)\\
            &\ll A\sum_{i=B}^N \frac{n}{i^2}\exp\left((-1+o(1))\tfrac{\log M}{\log i}\log\left(\tfrac{\log M}{\log i}\right)\right)
        \end{align*}
        using Lemma \ref{lem:recipsmooth} again.
        
        We treat $E_3$ similarly.  Taking the terms in that sum with $i<B$ we have by Lemma \ref{lem:recipsmooth}
        \begin{align*}
            \sum_{i=1}^{B-1}\frac{An}{i^2} \prod_{p\leq i}\frac{p-1}{p}S(i,M) &\leq An S(B,M)\sum_{i=1}^{B-1}\frac{1}{i^2} \\ &\ll An \log B \exp\left(\frac{(-1+o(1))\log M}{(\log \log M)^2}\log \log M\right)\\
            &= An \exp\left(\frac{(-1+o(1))\log M}{\log \log M}\right).
        \end{align*}
        Also, by Lemma \ref{lem:recipsmooth}, we bound         \begin{align*}
            \sum_{i=B}^{N}\frac{An}{i^2} \prod_{p\leq i}\frac{p-1}{p}S(i,M)  &\ll \sum_{i=B+1}^N \frac{n}{i^2}\exp\left((-1+o(1))\tfrac{\log M}{\log i}\log\left(\tfrac{\log M}{\log i}\right)\right)
        \end{align*}
        Note that both of these bounds are the same ones obtained for $E_1$, so \begin{equation}
            E_1+E_3 \ll \frac{An}{\exp\left(\tfrac{(1{+}o(1))\log M}{\log \log M}\right)} +
         \sum_{i=B+1}^N \tfrac{An}{i^2}\exp\left((-1{+}o(1))\tfrac{\log M}{\log i}\log\tfrac{\log M}{\log i}\right). \label{eq:e1e2bound}
        \end{equation} 
        
        We bound  the sum over $i$ above as 
        \begin{align}
            An&\sum_{i=B+1}^N \frac{1}{i^2}\exp\left((-1{+}o(1))\left( \tfrac{\log M}{\log i}\log\left(\tfrac{\log M}{\log i}\right)\right)\right) \label{eq:inite1e2sum}\\
            &\leq An\left(\sum_{i=B+1}^N \frac{1}{i}\right)\max_{B<i\leq N}\left\{\frac{1}{i}\exp\left((-1{+}o(1))\left(\tfrac{\log M}{\log i}\log\left(\tfrac{\log M}{\log i}\right)\right)\right)\right\}\nonumber \\
            &\leq An\log N\max_{B<i\leq N}\left\{\exp\left((1{+}o(1))\left(-\log i -\tfrac{\log M}{\log i}\log\left(\tfrac{\log M}{\log i}\right)\right)\right)\right\}. \label{eq:maxe1e2}
        \end{align}
        
        Taking the derivative of the quantity in the exponent with respect to $i$, 
        
        $$\frac{\log M - \log^2 i + \log M \log\left(\frac{\log M}{\log i}\right)}{i \log^2 i}$$ and setting the numerator equal to zero, we find that this sum is maximized when 
        $$\log i = \sqrt{\left(\tfrac{1}{2}+o(1)\right)\log M\log \log M}$$
        under the assumption that \begin{equation}\log \log i \leq \log \log N \ll \log \log M. \label{eq:NMassume} \end{equation}
        Using this in \eqref{eq:maxe1e2}, we find this sum is at most
        \begin{align*}
            An\log N \exp&\left((-1{+}o(1))\left(\sqrt{\tfrac{1}{2}\log M\log \log M} +\sqrt{\tfrac{2\log M}{\log \log M}}\log\left(\sqrt{\tfrac{2\log M}{\log \log M}}\right)\right)\right)\\
            &= An\log N \exp\left((-1{+}o(1))\sqrt{2\log M\log \log M} \right)\\
            &= An \exp\left(-\sqrt{(2{+}o(1))\log M\log \log M} \right).
        \end{align*}
        Here the $\log N$ term was absorbed into the $o(1)$ under the assumption of \eqref{eq:NMassume}.  As this is greater than the first term in \eqref{eq:e1e2bound}, we have 
        \begin{equation} E_1+E_3 \ll An \exp\left(-\sqrt{(2{+}o(1))\log M\log \log M} \right). \label{eq:e1e3bd} \end{equation}
        
        For $E_2$ we bound the two components of the sum in \eqref{eq:e2} separately.  Taking first the second term, and assuming $\epsilon\leq \frac{1}{2}$, we first bound

        \begin{align}
            A\sum_{i=1}^N \sum_{\substack{d<M\\ P^+(d)<i}}  \frac{dn^\epsilon}{(id)^{2\epsilon}}&\leq A\sqrt{n}\sum_{i=1}^N \frac{\Psi(M,i)}{i}\nonumber \ll A\sqrt{n} N \max_{i\leq N}\left\{ \frac{\Psi(M,i)}{i} \right\} \nonumber \\ 
            &\ll A\sqrt{n}NM\exp\left(-\sqrt{(2+o(1))\log M \log \log M}\right) \label{eq:e2epsilonbd}
        \end{align}
        using that $\max_{i}\left\{ \frac{\Psi(M,i)}{i}\right\} =\exp\left(-\sqrt{(2+o(1))\log M \log \log M}\right)$ (see \cite{makingsquares}).
        Now we consider the first sum appearing in \eqref{eq:e2}.
        \begin{align}
            &A\sum_{i=1}^N \sum_{\substack{d<M\\ P^+(d)<i}} 
        \frac{n}{di^2}\exp\left((-1{+}o(1))\frac{\log\left(\frac{n}{i^2d^2}\right)}{\log i} \log\left(\frac{\log\left(\frac{n}{i^2d^2}\right)}{\log i}\right) \right) \nonumber \\
        &\ll An\sum_{i=1}^N\left(  \sum_{\substack{d<M\\ P^+(d)<i}}\tfrac{1}{\Psi(d,i)}  \tfrac{\Psi(d,i)}{di^2}\exp\left((-1{+}o(1))\tfrac{\log\left(\frac{n}{d^2}\right)}{\log i} \log\left(\tfrac{\log\left(\frac{n}{d^2}\right)}{\log i}\right) \right) \right)  \nonumber \\
        &\ll An\sum_{i=1}^N\left(  \sum_{\substack{d<M\\ P^+(d)<i}}\hspace{-3mm}\tfrac{1}{\Psi(d,i)}\right)\max_{d<M}\left\{  \tfrac{\Psi(d,i)}{di^2}\exp\left(\tfrac{(-1{+}o(1))\log\left(\frac{n}{d^2}\right)}{\log i} \log\left(\tfrac{\log\left(\frac{n}{d^2}\right)}{\log i}\right) \right)\right\}  \nonumber \\
        &\leq An\log M\sum_{i=1}^N\tfrac{1}{i^2}\max_{d<M}\left\{  \tfrac{\Psi(d,i)}{d}\exp\left((-1{+}o(1))\tfrac{\log\left(\tfrac{n}{d^2}\right)}{\log i} \log\left(\tfrac{\log\left(\tfrac{n}{d^2}\right)}{\log i}\right) \right)\right\}. \label{eq:e2bound}
        \end{align}       
For $i\geq(\log n)^2$ we also have $i \geq (\log d)^2$ and so we can use the upper bound $\Psi(d,i) \ll d\exp\left((-1+o(1))\frac{\log d}{\log i}\log \left(\frac{\log d}{\log i}\right)\right)$, so that

        \begin{align*}
   \frac{\Psi(d,i)}{d}&\exp\left((-1{+}o(1))\frac{\log\left(\frac{n}{d^2}\right)}{\log i} \log\left(\frac{\log\left(\frac{n}{d^2}\right)}{\log i}\right) \right) \\ &\ll \exp\left((-1{+}o(1))\left(\frac{\log d}{\log i}\log \left(\frac{\log d}{\log i}\right)+\frac{\log\left(\frac{n}{d^2}\right)}{\log i} \log\left(\frac{\log\left(\frac{n}{d^2}\right)}{\log i}\right)\right) \right).  \nonumber 
        \end{align*}

    The derivative with respect to $d$ of the main term from the exponent above is $$\frac{2\log\left(\frac{\log \frac{n}{d^2}}{\log i}\right)-\log\left(\frac{\log d}{\log i}\right)+1}{d\log i}.$$
        Setting the numerator equal to 0 and solving for $d$, we find that this expression is maximized when
        \begin{equation} \log d = \left(\frac12+o(1)\right)\log n. \label{eq:logdmax} \end{equation}
Thus we bound
        \begin{align}
            \frac{\Psi(d,i)}{d}\exp\left((-1{+}o(1))\frac{\log\frac{n}{d^2}}{\log i} \log\frac{\log\frac{n}{d^2}}{\log i} \right)& \nonumber \\
            \ll \exp&\left((-1+o(1))\frac{\log n}{2\log i}\log\frac{\log n}{2\log i}\right). \label{eq:largeibd}
        \end{align}
    Inserting this bound into \eqref{eq:e2bound} and summing over $\log^2 n<i<N$, we find that the evaluation of this sum is identical to that of \eqref{eq:inite1e2sum}, and obtain
    \begin{align}
        An\log M&\sum_{\log^2n\leq i\leq N}\frac{1}{i^2}\exp\left(-(1+o(1))\frac{\log n}{2\log i}\log\left(\frac{\log n}{2\log i}\right)\right) \nonumber \\
        &\ll An\log M \exp\left((-1+o(1))\sqrt{\log n \log \log n}\right)\nonumber \\
        &=An\exp\left((-1+o(1))\sqrt{\log n \log \log n}\right). \label{eq:largeibound}
    \end{align}
    On the other hand, if $i<(\log n)^2$, then, since $\log^2 n > \log^2 d$ we bound $$\Psi(d,i)\leq \Psi\left(d,(\log n)^2\right) \ll d \exp\left(-(1+o(1))\frac{\log d}{2\log \log n}\log\left( \frac{\log d}{\log \log n}\right)\right).$$
    
    Using this in \eqref{eq:e2bound}, we find that the exponent is again maximized for $\log d$ satisfying \eqref{eq:logdmax}, and we get
    
    \begin{align}
   &\frac{\Psi(d,i)}{d}\exp\left((-1{+}o(1))\frac{\log\left(\frac{n}{d^2}\right)}{\log i} \log\left(\frac{\log\left(\frac{n}{d^2}\right)}{\log i}\right) \right) \nonumber \\ 
   &\ll \exp\left((-1{+}o(1))\left(\frac{\log d}{2\log \log n}\log\left( \frac{\log d}{\log \log n}\right)+\frac{\log\left(\frac{n}{d^2}\right)}{\log i} \log\left(\frac{\log\left(\frac{n}{d^2}\right)}{\log i}\right)\right) \right)  \nonumber \\
    &\ll \exp\left((-1{+}o(1))\left(\frac{\log d}{2\log \log n}\log\left( \frac{\log d}{\log \log n}\right)+\frac{\log\left(\frac{n}{d^2}\right)}{2\log \log n} \log\left(\frac{\log\left(\frac{n}{d^2}\right)}{2\log \log n}\right)\right) \right)  \nonumber \\
    &\ll \exp\left((-\tfrac{1}{4}+o(1))\log n\right).  \nonumber 
    \end{align}
    Inserting this in \eqref{eq:e2bound} and summing over $i<\log^2 n$, we find this contributes at most
    \begin{align*}
        An\log M\sum_{1\leq i < \log^2n} \frac{1}{i^2}\exp\left((-\tfrac{1}{4}+o(1))\log n\right) \ll An^{3/4 +o(1)}.
    \end{align*}
    Which is dominated by \eqref{eq:largeibound}.  Thus, combining \eqref{eq:e2epsilonbd} and \eqref{eq:largeibound} we have 
    \[E_2 \ll \frac{A\sqrt{n}NM}{\exp\left(\sqrt{(2{+}o(1))\log M \log \log M}\right)}+ \frac{An}{\exp\left((1{+}o(1))\sqrt{\log n \log \log n}\right)}.\]
    Combined with the upper bound \eqref{eq:e1e3bd} for $E_1+E_3$ and inserting into \eqref{eq:allerrorterms}, we get
    
    \begin{align*}
        \sum_{k=1}^n f(k,n) - nC_f \ &= \ E_1 \ + \ E_2 \ + \ E_3 + O\left(\frac{An}{N}\right) \\
        \ll An \exp&\left(-\sqrt{(2{+}o(1))\log M\log \log M} \right) \\ &+ A\sqrt{n}NM\exp\left(-\sqrt{(2{+}o(1))\log M \log \log M}\right)\\
        &+ An\exp\left((-1{+}o(1))\sqrt{\log n \log \log n}\right) + \frac{An}{N}.
    \end{align*}
    Optimizing this, subject to the conditions of \eqref{ineq:utinfty} and \eqref{eq:NMassume}, we take $\log N=\sqrt{\log n \log \log n}$ and $\log M=\tfrac{1}{2}\log n -\sqrt{\log n} \log \log n$.  This gives us that the expression above is 
    $$\ll An \exp\left((-1+o(1))\sqrt{\log n \log \log n}\right)$$ as desired.
\end{proof}
\section*{Acknowledgements}
The author is grateful to the anonymous referees whose comments helped improve constant appearing in the exponent of the main theorem, as well as many other helpful changes.  

\bibliographystyle{amsplain}
\bibliography{bibliography}

\appendix 
\section{Ranges of values used in computations}

\begin{table}[h!]
    \centering
    \begin{tabular}{|c|c|}
    \hline
        $i$&$d$ up to \\
        \hline
        12 & 6144 \\
        $13\leq i \leq15$ & 1536 \\
        $16\leq i \leq20$ & 1152 \\
        $21\leq i \leq24$ & 256\\
        25 &150\\
        $26\leq i \leq$30 & 16\\
        $31\leq i \leq$40 & 12\\
        $41\leq i \leq$55 &8\\
        $56\leq i \leq$60 &7\\
        $61\leq i \leq$75 &4\\
        $76\leq i \leq$100 &3\\
        $101\leq i \leq$250 &1\\
        \hline
    \end{tabular}
    \caption{Values of $g(d,t)$ were computed for all $d$ up to and including the value in the table for each value of $i$ in order to estimate the constant $\theta$ in the count of geometric progression free sets.}
    \label{tab:gpf}
\end{table}
\end{document}